\def\ex{\mathrm{ex}}
\newtheorem{theorem}{Theorem}[section]
\newtheorem{lemma}[theorem]{Lemma}
\newtheorem{claim}[theorem]{Claim}
\newtheorem{proposition}[theorem]{Proposition}
\newtheorem{conjecture}[theorem]{Conjecture}
\theoremstyle{definition}
\newtheorem{definition}[theorem]{\bf Definition}
\title{On the generalized Tur\'an problem for odd cycles}
\author{Csongor Beke\thanks{Trinity College, University of Cambridge, United Kingdom. Research supported by the Trinity College Summer Studentship Scheme. Email: \textbf{cb2138@cam.ac.uk}.}
	\and
Oliver Janzer\thanks{Department of Pure Mathematics and Mathematical Statistics, University of Cambridge, United Kingdom. Research supported by a fellowship at Trinity College. Email: \textbf{oj224@cam.ac.uk}.}}
\date{}
\begin{document}

\maketitle

\begin{abstract}
    In 1984, Erd\H os conjectured that the number of pentagons in any triangle-free graph on $n$ vertices is at most $(n/5)^5$, which is sharp by the balanced blow-up of a pentagon. This was proved by Grzesik, and independently by Hatami, Hladk\'y, Kr\'al', Norine and Razborov. As an extension of this result for longer cycles, we prove that for each odd $k\geq 7$, the balanced blow-up of $C_k$ (uniquely) maximises the number of $k$-cycles among $C_{k-2}$-free graphs on $n$ vertices, as long as $n$ is sufficiently large. We also show that this is no longer true if $n$ is not assumed to be sufficiently large. Our result strengthens results of Grzesik and Kielak who proved that for each odd $k\geq 7$, the balanced blow-up of $C_k$ maximises the number of $k$-cycles among graphs with a given number of vertices and no odd cycles of length less than $k$.

    We further show that if $k$ and $\ell$ are odd and $k$ is sufficiently large compared to $\ell$, then the balanced blow-up of $C_{\ell+2}$ does not asymptotically maximise the number of $k$-cycles among $C_{\ell}$-free graphs on $n$ vertices. This disproves a conjecture of Grzesik and Kielak.
\end{abstract}

\section{Introduction}

The Tur\'an problem is one of the central topics in Graph Theory. It is concerned with estimating, for a graph $H$ and positive integer $n$, the maximum number of edges that an $n$-vertex, $H$-free graph can have. A generalization of this problem was introduced by Alon and Shikhelman \cite{AS16}. For graphs $T$ and $H$ and a positive integer $n$, they defined $\ex(n,T,H)$ to be the maximum possible number of copies of $T$ in an $H$-free graph on $n$ vertices. This function became known as the \emph{generalized Tur\'an number} of $T$ and $H$. The name reflects the fact that when $T=K_2$, then $\ex(n,T,H)$ is just the usual Tur\'an number $\ex(n,H)$. Although the systematic study of generalized Tur\'an numbers was only initiated recently, many special cases had already been studied for decades. The first such result (in which $T$ is different from $K_2$) was obtained in 1949 by Zykov \cite{Zyk49}, who determined $\ex(n,K_t,K_r)$ for all $t<r$. The same result was proven later but independently by Erd\H os \cite{Erd62} and by Roman \cite{Rom76}.

The other instance in which the problem has a long history is when $T$ and $H$ are odd cycles. In 1984, Erd\H os \cite{Erd84} conjectured that $\ex(n,C_5,C_3)\leq (n/5)^5$. Note that when $n$ is divisible by 5, the balanced blow-up of a $5$-cycle shows that $\ex(n,C_5,C_3)\geq (n/5)^5$. Here and below, a balanced blow-up of a graph $F$ is obtained by replacing the vertices of $F$ with independent sets of size differing by at most one from each other and replacing the edges of $F$ by complete bipartite graphs between the corresponding independent sets.\footnote{Note that when $n$ is not divisible by $|V(F)|$, the balanced blow-up of $F$ on $n$ vertices is not necessarily unique.} In 1989, Gy\H ori \cite{Gyo89} proved that $\ex(n,C_5,C_3)\leq 1.03(n/5)^5$. Erd\H os's conjecture was proved, using the method of flag algebras, by Grzesik \cite{Grz12} and independently by Hatami, Hladk\'y, Kr\'al', Norine and Razborov \cite{HHKNR13}.

The case $T=C_3$, $H=C_{2k+1}$ has also been studied. Bollob\'as and Gy\H ori \cite{BGy08} showed that $(1+o(1))\frac{n^{3/2}}{3\sqrt{3}}\leq \ex(n,C_3,C_5)\leq (1+o(1))\frac{5}{4}n^{3/2}$. Gy\H ori and Li \cite{GyL12} obtained results bounding $\ex(n,C_3,C_{2k+1})$ which were improved by Alon and Shikhelman \cite{AS16} and F\"uredi and \"Ozkahya~\cite{FO17}. Gishboliner and Shapira \cite{GS20} determined the order of magnitude of $\ex(n,C_k,C_{\ell})$ for all $k$ and $\ell$, while this was done independently for all even $k$ and $\ell$ by Gerbner, Gy\H ori, Methuku and Vizer \cite{GGyMV20}.

Our main focus in this paper is also the case where $T$ and $H$ are odd cycles. The function $\ex(n,C_{k},C_{\ell})$ exhibits very different behaviour depending on whether $k<\ell$ or $k>\ell$. Indeed, in general it is well-known that when there is a graph homomorphism from $H$ to $T$, then $\ex(n,T,H)=o(n^{|V(T)|})$ (called the \emph{sparse} case), whereas if there is no such homomorphism, then $\ex(n,T,H)=\Theta(n^{|V(T)|})$ (called the \emph{dense} case). In this paper, we will be concerned with the dense case, which corresponds to $\ex(n,C_k,C_{\ell})$ with $k>\ell$ (assuming that both $k$ and $\ell$ are odd).

Note that the conjecture of Erd\H os about $\ex(n,C_5,C_3)$ falls into this case. Extending this result to longer cycles, Grzesik and Kielak \cite{GK22} proved that for each odd $k\geq 7$, every $n$-vertex graph with no odd cycle of length less than $k$ contains at most $(n/k)^k$ cycles of length~$k$, with equality attained only by the balanced blow-up of a $k$-cycle (and only for $n$ divisible by $k$). As a corollary of this result, Grzesik and Kielak showed that for any odd $k\geq 7$, we have $\ex(n,C_{k},C_{k-2})= (n/k)^k+o(n^k)$. Our main result determines the function $\ex(n,C_k,C_{k-2})$ exactly and characterises the extremal constructions (for large $n$). Along the way, we also obtain a stability result (see Lemma \ref{lemma:G}).

\begin{restatable}{theorem}{mainthm}
\label{thm:k,k-2}
For each odd integer $k \geq 7$, there exists $n_0$ such that if $n\geq n_0$ and $G$ is a $C_{k-2}$-free graph on $n$ vertices maximising the number of $k$-cycles, then $G$ is a balanced blow-up of $C_k$. In particular, for each $n\geq n_0$, we have $\ex(n,C_k,C_{k-2})\leq (n/k)^k$.
\end{restatable}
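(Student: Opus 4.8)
The plan is to establish the theorem via a stability-plus-refinement argument, following the paradigm that has become standard for such exact generalized Tur\'an results. The starting point is the asymptotic result of Grzesik and Kielak, which already tells us that any $C_{k-2}$-free graph $G$ on $n$ vertices has at most $(n/k)^k + o(n^k)$ copies of $C_k$, with the balanced blow-up of $C_k$ being the asymptotic extremizer. The first step is therefore to prove a stability statement (this is presumably the content of the promised Lemma \ref{lemma:G}): if $G$ is $C_{k-2}$-free and has $(1-o(1))(n/k)^k$ copies of $C_k$, then $G$ is $o(n^2)$-close in edit distance to a balanced blow-up of $C_k$ — equivalently, $V(G)$ admits a partition into $k$ parts $V_1,\dots,V_k$ of size $(1+o(1))n/k$ such that almost all edges go between consecutive parts (cyclically) and almost no edges go within a part or between non-consecutive parts. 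One natural route to this is to feed the near-extremal $G$ into the flag-algebra / graph-limit framework underlying the proof of the asymptotic bound, using the uniqueness of the optimal solution; alternatively one argues directly, using that a typical $k$-cycle in $G$ must use essentially one vertex from each part of the (approximately blow-up) structure, and a counting/convexity argument pins down the part sizes and the edge distribution.

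Given the stability lemma, the second and main step is the exact \emph{stability-to-extremal} bootstrapping. Let $G$ be an actual maximizer on $n$ vertices (large). By stability, fix the partition $V_1,\dots,V_k$ with only $o(n^2)$ ``bad'' edges (inside parts or across non-consecutive parts) and only $o(n^2)$ missing ``good'' edges (between consecutive parts). Now one wants to show that in fact there are \emph{no} bad edges and \emph{no} missing good edges, and that the part sizes are as equal as possible. The standard technique is a local-move / weight-shifting argument: for each vertex $v$, estimate the number of $k$-cycles through $v$ as a function of which part $v$ is assigned to and which of its incident edges are present; show that any bad edge can be deleted, or any vertex moved to a better part, without decreasing — and in the presence of the remaining defects, strictly increasing — the count of $k$-cycles, contradicting maximality. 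Concretely: (i) first clean up the ``interface'' — show that every vertex has almost all of its neighbourhood in the two parts adjacent to its own part, since otherwise one finds a more profitable placement; (ii) then argue that a vertex with \emph{any} neighbour outside its two adjacent parts, or missing \emph{any} edge to an adjacent part, can be locally improved, so $G$ is \emph{exactly} the complete blow-up $C_k[V_1,\dots,V_k]$; (iii) finally, since the number of $k$-cycles in the complete blow-up $C_k[n_1,\dots,n_k]$ with $\sum n_i = n$ is exactly $\prod n_i$ plus lower-order symmetric terms — actually the number of (labelled or unlabelled) $k$-cycles is a symmetric function maximized, by AM--GM/convexity together with an integrality/majorization argument, uniquely at the balanced partition — conclude that $G$ is the \emph{balanced} blow-up. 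Here it is important that $C_{k-2}$-freeness is automatic for \emph{any} blow-up of $C_k$ (a blow-up of $C_k$ contains no odd cycle shorter than $k$), so in step (ii) there is no constraint-violation obstacle to adding good edges.

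The main obstacle I expect is step (ii), the passage from ``$o(n^2)$-close to a blow-up'' to ``\emph{exactly} a blow-up'': one must rule out a small but positive number of stray edges by a genuinely local argument, and the difficulty is that deleting a bad edge or moving a vertex can simultaneously destroy many $k$-cycles that used that edge, so one needs tight two-sided estimates on the number of $k$-cycles through a given edge or vertex — both in the current graph $G$ and in the modified graph — accurate enough to see which modification wins. This requires controlling how a $k$-cycle is forced, by the near-blow-up structure, to visit the parts in cyclic order (so that a bad edge can lie on relatively few $k$-cycles, namely those that ``waste'' their length elsewhere), which in turn leans on the $C_{k-2}$-freeness to forbid short-cutting. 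A secondary technical point is handling vertices in the ``junk'' set whose behaviour is not yet pinned down — typically one shows there are only $O(1)$, or even zero, such vertices once the bulk structure is rigid, by a final round of local optimization, and then the exact integrality/majorization computation in step (iii) finishes the proof and yields the stated uniqueness of the balanced blow-up.
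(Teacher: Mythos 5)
Your high-level plan (asymptotic bound, then stability, then a refinement to the exact extremal graph) matches the paper's paradigm, but the steps that actually carry the proof are missing or would fail as stated. First, your stability step is left essentially unproved: ``feeding $G$ into the flag-algebra framework'' does not apply (neither Grzesik--Kielak's proof nor this paper uses flag algebras, and uniqueness of an asymptotic optimizer does not by itself yield a stability statement), and the alternative ``direct counting/convexity argument'' is not specified. The paper obtains stability by first invoking the Graph Removal Lemma (Lemma \ref{lemma:rmvodd}) to pass to a subgraph $G'$ with \emph{no} odd cycle of length less than $k$ at the cost of only $o(n^k)$ $k$-cycles, and then running the Grzesik--Kielak weighting argument in quantitative form (Lemmas \ref{lem:weightsandkcycles}--\ref{lemma:dense}). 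This reduction is what makes the resulting partition rigid rather than merely $o(n^2)$-approximate: in $G'$ the parts are genuinely independent and the density between non-consecutive parts is \emph{exactly} zero, since a single offending edge closes a short odd cycle. Your formulation, with $o(n^2)$ bad edges in unspecified locations, leaves you precisely the ``delicate two-sided counting'' problem you flag as the main obstacle; the point of the paper's route is that this problem is avoided entirely.

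Second, your step (ii) contains a move that fails as written: deleting a bad edge can only destroy $k$-cycles, never create them, so ``any bad edge can be deleted \dots{} strictly increasing the count'' cannot be the mechanism. The paper instead shows (Claim \ref{claim:nbr}) that once a linear-sized minimum-degree skeleton $W_0,\dots,W_{k-1}$ is in place (Lemma \ref{lemma:mindeg}), the original hypothesis of $C_{k-2}$-freeness by itself forbids any vertex from having neighbours in two parts at the wrong cyclic distance --- one exhibits an explicit $(k-2)$-cycle through such a vertex using the minimum-degree condition. Hence the bulk $B$ is exactly a subgraph of a blow-up of $C_k$, with no counting required. The leftover vertices are then handled not by iterated local moves but by a single global comparison: moving all of $V(G)\setminus B$ into the smallest part yields a blow-up $H$ with strictly more $k$-cycles unless $V(G)=B$, because each exceptional vertex lies on at most $\frac{3}{4}\prod_{i\ne j}|B_i|$ of the $k$-cycles meeting $B$ in $k-1$ vertices (it misses a quarter of one adjacent part), while it would gain the full product in $H$; the quadratic-in-$(n-|B|)$ error terms are absorbed since $n-|B|\leq \epsilon_5 n$. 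Your concluding AM--GM step is fine (though note that in a blow-up of $C_k$ every $k$-cycle uses exactly one vertex per part, so the count is exactly $\prod_i |B_i|$ with no lower-order terms), but without the removal-lemma reduction and the structural Claim \ref{claim:nbr} the argument does not close.
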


We remark that, similarly to the argument of Grzesik and Kielak, our proof does not use flag algebras.

Note that all balanced blow-ups of $C_k$ on $n$ vertices contain the same number of $k$-cycles, so Theorem \ref{thm:k,k-2} shows that they are precisely the extremal graphs. In the concluding remarks section, we show that when $n$ is not assumed to be sufficiently large, then the extremal construction can be different from a balanced blow-up of a $k$-cycle.

Grzesik and Kielak conjectured that more generally, for every odd $k$ and $\ell$ with $k>\ell$, the balanced blow-up of a $C_{\ell+2}$ asymptotically attains the maximum number of $k$-cycles that an $n$-vertex $C_{\ell}$-free graph can contain.\footnote{Note that although the balanced blow-up is not necessarily unique, all balanced blow-ups contain asymptotically the same number of $k$-cycles.} We disprove this conjecture, showing that when $k$ is sufficiently large compared to $\ell$, there is an unbalanced blow-up of $C_{\ell+2}$ which contains asymptotically more copies of $C_k$ than the balanced blow-up.

\begin{theorem} \label{thm:k,l}
    For each odd $\ell\geq 3$, there is some $k_0$ such that whenever $k\geq k_0$ is odd, then there is a sequence of $n$-vertex $C_{\ell}$-free graphs which contain asymptotically more copies of $C_k$ than the balanced blow-up of $C_{\ell+2}$ on $n$ vertices.
\end{theorem}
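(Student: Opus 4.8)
We disprove the Grzesik--Kielak conjecture by means of a suitable \emph{unbalanced} blow-up of $C_m$, where $m:=\ell+2$. First note that since $\ell$ is odd with $0<\ell<m$, there is no homomorphism from $C_\ell$ to $C_m$: a closed walk of length $\ell$ in $C_m$ would have net displacement divisible by $m$ yet of the same parity as $\ell$, i.e.\ odd, which is impossible since such a displacement has absolute value at most $\ell<m$. Hence every blow-up of $C_m$ is $C_\ell$-free, and it suffices to find one that beats the balanced blow-up of $C_{\ell+2}$ in the number of $k$-cycles. For a probability vector $x=(x_1,\dots,x_m)$ with coordinates indexed around the cycle (so that $q\sim q+1$), let $B_x$ denote the blow-up of $C_m$ with parts of size $\approx x_q n$. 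A routine computation (choose a closed walk of length $k$ in $C_m$, lift it to distinct vertices of $B_x$, and divide by $2k$) gives that the number of copies of $C_k$ in $B_x$ equals $\tfrac{n^k}{2k}F_k(x)\,(1+o_n(1))$, where
\[
F_k(x)=\operatorname{tr}\!\big((D^{1/2}AD^{1/2})^k\big)=\sum_{i=1}^m\lambda_i(x)^k ,
\]
$A=A(C_m)$, $D=\operatorname{diag}(x)$, and $\lambda_1(x)\ge\cdots\ge\lambda_m(x)$ are the (real) eigenvalues of the symmetric nonnegative matrix $D^{1/2}AD^{1/2}$. The balanced blow-up corresponds to $x=\tfrac1m\mathbf 1$, for which $F_k(\tfrac1m\mathbf 1)=m^{-k}\operatorname{tr}(A^k)=m^{-k}\sum_{j=0}^{m-1}(2\cos\tfrac{2\pi j}{m})^k$, a nonnegative quantity that is at most $m(2/m)^k$.

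Thus it is enough to produce a probability vector $x$ with all coordinates positive such that $F_k(x)>m(2/m)^k$ for all large $k$. The plan is to arrange that (i) the largest eigenvalue satisfies $\lambda_1(x)>2/m$, and (ii) $\lambda_1(x)$ strictly exceeds $|\lambda_i(x)|$ for every $i\ge2$. Given (ii), we get $F_k(x)=\lambda_1(x)^k(1+o(1))$ as $k\to\infty$, and then (i) yields $F_k(x)\big/\big(m(2/m)^k\big)\ge\tfrac1m(1-o(1))\,(m\lambda_1(x)/2)^k\to\infty$, which is far more than we need. Property (ii) comes for free from Perron--Frobenius: whenever all $x_q>0$ the matrix $D^{1/2}AD^{1/2}$ has precisely the zero pattern of $A(C_m)$, the adjacency matrix of a connected, non-bipartite graph (here we use that $m$ is odd), hence it is primitive, so its spectral radius $\lambda_1(x)$ is a simple eigenvalue that strictly dominates all the others in modulus. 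For (i) we use the Rayleigh characterisation $\lambda_1(x)=\max_u u^\top A u/(u^\top D^{-1}u)$ with the test vector $u=e_1+e_2$ (valid since $1\sim2$), which gives $\lambda_1(x)\ge 2/(x_1^{-1}+x_2^{-1})$. It therefore suffices to take $x_1=x_2=a$ for some $a$ with $2/m<a<1/2$ — such $a$ exists exactly because $m\ge5$ — and to distribute the remaining mass $1-2a>0$ equally among the other $m-2$ coordinates; concretely one may take $a=\tfrac1m+\tfrac14$. Then $\lambda_1(x)\ge a>2/m$ and all coordinates are positive, so (i) and (ii) both hold.

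To conclude: fix odd $\ell\ge3$, set $m=\ell+2$ and fix the vector $x$ as above, then choose $k_0=k_0(\ell)$ large enough that $F_k(x)>m(2/m)^k\ge F_k(\tfrac1m\mathbf 1)$ whenever $k\ge k_0$. For any odd $k\ge k_0$ and all large $n$, the graph $B_x$ on $n$ vertices (take part sizes $\lfloor x_q n\rfloor$, adding the small deficit to one part) is $C_\ell$-free and has $(1+o_n(1))\tfrac{n^k}{2k}F_k(x)$ copies of $C_k$, asymptotically more than the $(1+o_n(1))\tfrac{n^k}{2k}F_k(\tfrac1m\mathbf 1)$ copies of $C_k$ in the balanced blow-up of $C_{\ell+2}$ on $n$ vertices. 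This provides the required sequence of graphs.

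The one place where care is needed is property (ii). Pushing $a$ closer to $1/2$ increases $\lambda_1(x)$ but drives $B_x$ towards a complete bipartite graph, for which $\lambda_1(x)\approx-\lambda_m(x)$ and hence $\sum_i\lambda_i(x)^k\approx0$ for odd $k$ — so one cannot simply maximise $\lambda_1$. Keeping $x$ bounded away from the boundary of the simplex is exactly what makes the Perron--Frobenius gap argument work; beyond that the proof uses only the standard blow-up count and elementary spectral estimates.
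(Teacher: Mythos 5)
Your proof is correct, and it takes a genuinely different route from the paper's. The paper lower-bounds the number of $k$-cycles in an unbalanced blow-up of $C_{\ell+2}$ by counting only cycles of one combinatorial type (one vertex in each of the first $\ell$ parts and $(k-\ell)/2$ vertices alternating in each of the last two), upper-bounds the balanced blow-up by the number of walks of length $k-1$ from one part, and then just needs two adjacent weights with $w_\ell w_{\ell+1}>(2/(\ell+2))^2$. You instead compute the exact asymptotics $\frac{n^k}{2k}\operatorname{tr}\big((D^{1/2}AD^{1/2})^k\big)$ for \emph{every} blow-up, use primitivity of $A(C_m)$ for odd $m$ (Perron--Frobenius) to get a strict spectral gap so that $F_k(x)\sim\lambda_1(x)^k$, and use the Rayleigh quotient with test vector $e_1+e_2$ to force $\lambda_1(x)>2/m$; I checked the identity $\lambda_1=\max_u u^\top Au/(u^\top D^{-1}u)$, the resulting harmonic-mean bound $\lambda_1\ge 2/(x_1^{-1}+x_2^{-1})=a$, and the admissibility of $a=\tfrac1m+\tfrac14$ for $m\ge 5$, and all are fine; your homomorphism argument for $C_\ell$-freeness is also correct (the paper takes this for granted). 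The two approaches are secretly close: your test vector concentrates mass on one edge of $C_m$, which is exactly the paper's device of making two adjacent parts large, and your condition $\lambda_1(x)>2/m$ is the spectral reformulation of their inequality (\ref{eqn:weights}). What each buys: the paper's argument is entirely elementary and, after optimisation in the concluding remarks, yields the quantitative bound $k_0=\ell+C\ell/\log\ell$; your spectral formulation is conceptually cleaner, gives the precise leading constant for every blow-up at once, and reduces the comparison of blow-ups (for $k\to\infty$) to comparing top eigenvalues, at the cost of importing Perron--Frobenius and of a less explicit $k_0$ unless one also tracks the subdominant eigenvalues. One small point worth making explicit if you write this up: when you say the balanced blow-up has $(1+o_n(1))\frac{n^k}{2k}F_k(\tfrac1m\mathbf 1)$ copies, you only need the upper bound $\frac{1}{2k}\operatorname{tr}(A(H_n)^k)\le \frac{n^k}{2k}m(2/m)^k(1+o(1))$, which sidesteps any degeneracy if $F_k(\tfrac1m\mathbf 1)$ were small.
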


In fact, as we will demonstrate in the concluding remarks section, we may choose $k_0=\ell+C\frac{\ell}{\log \ell}$ for some absolute constant $C$.

The rest of this paper is organized as follows. In Section \ref{sec:proofs}, we prove Theorem~\ref{thm:k,k-2} and Theorem~\ref{thm:k,l}. In Section \ref{sec:remarks}, we give some concluding remarks.

\section{The proof of Theorem \ref{thm:k,k-2} and Theorem \ref{thm:k,l}} \label{sec:proofs}

\subsection{The proof of Theorem \ref{thm:k,k-2}}

The proof of Theorem \ref{thm:k,k-2} uses a so-called \emph{stability} argument and is structured as follows. In Lemma \ref{lemma:rmvodd} we prove that any $C_{k-2}$-free graph $G$ contains a subgraph $G'$ that is $C_\ell$-free for all $3\leq \ell <k$ odd and has nearly as many $k$-cycles as $G$. In Lemma \ref{lemma:dense} and Lemma~\ref{lemma:mindeg} we show that if $G'$ has nearly $(n/k)^k$ $k$-cycles, then it must have a structure close to a blow-up of a $C_k$. In Lemma \ref{lemma:G} we obtain a similar result for the original graph $G$, which is then used to complete the proof of Theorem \ref{thm:k,k-2}.

\textbf{Notation.} For disjoint vertex subsets $S,T$ of a graph $G$, let $e_{G}(S,T)$ denote the number of edges of $G$ which have one endpoint in $S$ and one endpoint in $T$. Let $\rho_{G}(S,T)=\frac{e_{G}(S,T)}{|S||T|}$ denote the fraction of pairs in $S\times T$ that are edges of $G$. For a vertex $v$ and a vertex set $T$ (which may contain $v$), we write $e_G(v,T)$ for the number of neighbours of $v$ in $T$. Furthermore, we let $\rho_G(v,T)=e_G(v,T)/|T|$. For simplicity of notation, we drop the subscript if the graph $G$ is clear from context. For any vertices $v$ and $w$, by $\textrm{dist}(v, w)$ we denote the distance between the vertices $v$ and $w$ in $G$.
Also, for $v \in V (G)$, write $N(v)$ for the neighbourhood of a vertex $v$. Logarithms are to base $e$.

In order to delete short odd cycles we will use the celebrated Graph Removal Lemma (see, for example, the survey of Conlon and Fox \cite{CF13}).

\begin{lemma}[Graph Removal Lemma] \label{lemma:grl}
    For any graph $H$ and any $\epsilon_0>0$, there exists $\delta_0>0$ such that any graph on $n$ vertices which contains at most $\delta_0 n^{v(H)}$ copies of $H$ may be made $H$-free by removing at most $\epsilon_0 n^2$ edges.
\end{lemma}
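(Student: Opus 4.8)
The plan is to deduce this from Szemer\'edi's Regularity Lemma together with the associated counting (embedding) lemma; this is the classical route, and since the statement is quoted rather than proved in the paper, I will only sketch it. Fix a graph $H$ with $h=v(H)$ vertices and at least one edge (if $H$ is edgeless the statement is trivial), and let $\epsilon_0>0$ be given. First I would choose a density threshold $d>0$ with $d\le \epsilon_0/2$, and then a regularity parameter $\epsilon>0$ that is small in terms of $d$ and $h$ — precisely, small enough that any system of clusters which is pairwise $\epsilon$-regular along the edges of $H$ with all those densities at least $d$ supports the counting lemma below. Apply the Regularity Lemma to $G$ with parameter $\epsilon$ and with a lower bound on the number of parts large enough that the error terms in the cleaning step are at most $\epsilon_0 n^2$ and at least $h$. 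This produces an equitable partition $V(G)=V_1\cup\dots\cup V_M$ with $M\le M(\epsilon)$, where $M(\epsilon)$ depends only on $\epsilon$ (hence only on $H$ and $\epsilon_0$), such that all but at most $\epsilon M^2$ of the pairs $(V_i,V_j)$ are $\epsilon$-regular.

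Next I would carry out the standard cleaning step. Let $G'\subseteq G$ be obtained by deleting every edge that (i) lies inside some part $V_i$, or (ii) lies between a pair $(V_i,V_j)$ that fails to be $\epsilon$-regular, or (iii) lies between a pair $(V_i,V_j)$ with $\rho_G(V_i,V_j)<d$. Counting these three contributions gives at most $\tfrac{1}{2}n^2/M + \epsilon n^2 + d\,n^2$ deleted edges up to lower-order terms, which is at most $\epsilon_0 n^2$ once $M$ is large and $\epsilon$, $d$ are as above. It then remains to show that $G'$ contains no copy of $H$, which will follow once $\delta_0$ is chosen small enough at the very end in terms of $\epsilon$, $d$, $M$ and $h$.

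Suppose for contradiction that $G'$ contains a copy of $H$, say with vertices $w_1,\dots,w_h$ and $w_a\in V_{\sigma(a)}$. Since $G'$ has no intra-part edge, $\sigma(a)\ne\sigma(b)$ for every edge $ab$ of $H$, and each such pair $(V_{\sigma(a)},V_{\sigma(b)})$ is $\epsilon$-regular of density at least $d$ by construction of $G'$. The counting lemma — which I would prove by embedding the vertices of $H$ one at a time and using $\epsilon$-regularity at each step to show that, outside an $o(1)$ fraction of bad partial embeddings, the already-embedded neighbours of the next vertex have at least roughly $(d-\epsilon)^{h}|V_i|$ common neighbours in the relevant part — then yields at least $c\,(n/M)^h$ copies of $H$ in $G'$, and hence in $G$, respecting the assignment $\sigma$, where $c=c(\epsilon,d,h)>0$. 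Choosing $\delta_0< c\,M^{-h}$ contradicts the assumption that $G$ has at most $\delta_0 n^{v(H)}$ copies of $H$. Hence $G'$ is $H$-free, and it was obtained from $G$ by deleting at most $\epsilon_0 n^2$ edges, as required.

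The main obstacle is the counting lemma together with the correct ordering and choice of constants: one must control, at each of the $h$ embedding steps, the small proportion of partially embedded tuples whose neighbourhood into the next cluster is atypically small, and verify that after all $h$ steps a positive fraction of the $\approx (n/M)^h$ candidate tuples survive — this is what forces $\epsilon$ to be fixed small relative to $d$ and $h$ before $M(\epsilon)$, and finally $\delta_0$, are determined. By comparison, the cleaning estimate and the closing quantifier chase are routine. One should also note that even when $\sigma$ is not injective — two non-adjacent vertices of $H$ falling in the same part — the embedding argument still goes through, since at each step only the edges of $H$ are used and the parts are large enough that insisting the images be distinct costs nothing.
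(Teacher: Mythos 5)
The paper does not prove this lemma at all: it is quoted as the celebrated Graph Removal Lemma with a pointer to the survey of Conlon and Fox, so there is no in-paper proof to compare against. Your sketch is the standard regularity-lemma proof (clean the partition by deleting intra-cluster, irregular-pair and sparse-pair edges, then invoke the counting lemma to show a surviving copy of $H$ forces at least $c(n/M)^{v(H)}$ copies, contradicting the hypothesis once $\delta_0<cM^{-v(H)}$), and it is correct in outline, with the quantifiers in the right order ($d$ and $\epsilon$ fixed from $\epsilon_0$ and $H$, then $M(\epsilon)$, then $\delta_0$) and with the genuinely nontrivial content correctly identified as the counting lemma and the handling of non-injective cluster assignments. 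This matches what the cited literature does, so there is nothing to flag beyond the fact that a fully rigorous version would need the counting lemma proved in detail, which you acknowledge.
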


We will also need the following easy bound on the generalized Turán number of odd cycles.

\begin{lemma} \label{prop:l,k-2}
    For any odd numbers $3\leq \ell < k$ we have $\ex(n,C_{\ell},C_{k})=o(n^{\ell})$.
\end{lemma}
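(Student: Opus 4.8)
The plan is to prove the equivalent statement that for every $\epsilon>0$ there is $n_0$ such that every graph on $n\geq n_0$ vertices with at least $\epsilon n^\ell$ copies of $C_\ell$ contains a copy of $C_k$; then a $C_k$-free graph has at most $\epsilon n^\ell$ copies of $C_\ell$ for each fixed $\epsilon$ and large $n$, which is the claim. This is a special case of the Alon--Shikhelman principle that $\ex(n,T,H)=o(n^{v(T)})$ whenever $H$ admits a homomorphism into $T$, which one could simply cite. The relevant homomorphism here is $\varphi\colon C_k\to C_\ell$, obtained (since $\ell\le k$ are both odd) by traversing $C_\ell$ once and then bouncing $\tfrac{k-\ell}{2}$ times along its last edge before closing up. Note that $\varphi$ is \emph{not} injective, so composing it with a single embedded copy of $C_\ell$ in $G$ only yields a degenerate homomorphism $C_k\to G$, not a genuine $C_k$; the content of the lemma lies in upgrading this, and for that one has to use that $G$ contains \emph{many} copies of $C_\ell$.

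My preferred route is a supersaturation argument. The homomorphism $\varphi$ shows that the balanced blow-up $C_\ell[t]$ contains $C_k$ as soon as $t\geq k$: send the vertices of $C_k$ to distinct vertices of $C_\ell[t]$ according to $\varphi$, which is a valid embedding because consecutive classes of the blow-up are completely joined, and because a graph homomorphism never maps two adjacent vertices to the same class. Hence it suffices to show that a graph with $\epsilon n^\ell$ copies of $C_\ell$ contains $C_\ell[k]$ once $n$ is large, which is the standard fact that a positive density of copies of a fixed graph forces an arbitrarily large blow-up of it.

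If one prefers to avoid quoting that, an alternative is to first clean up $G$: repeatedly delete any vertex lying in fewer than $\tfrac{\epsilon}{2\ell}n^{\ell-1}$ of the remaining copies of $C_\ell$. This destroys fewer than $\tfrac{\epsilon}{2}n^\ell$ copies in total, so a subgraph $G'$ remains which still contains a copy of $C_\ell$ and in which every vertex lies in $\Omega_\epsilon(n^{\ell-1})$ copies of $C_\ell$, and hence has degree $\Omega_\epsilon(n)$. Thus $G'$ is non-bipartite with minimum degree at least $\delta n$ for some $\delta=\delta(\epsilon)>0$, and it is still $C_k$-free. One then finishes by showing that such a $G'$ must contain $C_k$: either via a second supersaturation step (a graph of linear minimum degree containing a $C_\ell$ in fact contains many, hence a large blow-up of $C_\ell$, as above), or by a direct analysis of the cycle lengths present in a non-bipartite graph of linear minimum degree, using that its odd girth is at most $\ell$.

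I expect the main obstacle, in either approach, to be the same: manufacturing from an abundance of short odd cycles a single odd cycle of the \emph{exact} length $k$. Reducing to large minimum degree (or otherwise isolating the ``many copies'' hypothesis in a robust form) is routine; the real work is the supersaturation input that produces a large blow-up of $C_\ell$ (or directly a copy of $C_k$), and this is precisely where the non-injectivity of $\varphi$ has to be overcome.
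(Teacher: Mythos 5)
Your main route is correct but genuinely different from the paper's. The paper gives a short, self-contained counting argument: fix a path $P=vv_1\dots v_{\ell-4}u$ of length $\ell-3$; every $\ell$-cycle through $P$ closes up via an edge of one of the (bipartite) graphs spanned by $N(u)\setminus N(v)$, $N(v)\setminus N(u)$ and $N(u)\cap N(v)$; if any of these contained a path of length $k-\ell+1$, it would combine with $P$ into a $C_k$, so by the Erd\H os--Gallai theorem each has at most $\frac{k-\ell}{2}m$ edges, whence at most $2(k-\ell)n$ $\ell$-cycles pass through any fixed $P$ and $\ex(n,C_\ell,C_k)\le 2(k-\ell)n^{\ell-1}$. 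You instead use the homomorphism $C_k\to C_\ell$ together with the standard supersaturation fact that $\epsilon n^{v(H)}$ copies of $H$ force the blow-up $H[t]$ for large $n$; this is precisely the folklore ``sparse case'' principle that the paper itself quotes in the introduction (``when there is a graph homomorphism from $H$ to $T$, then $\ex(n,T,H)=o(n^{|V(T)|})$''), the authors supplying their own proof only for completeness. Your approach buys generality and modularity (it works verbatim for any pair $T,H$ with a homomorphism $H\to T$, and you correctly identify and verify the bouncing homomorphism and the bound $t\ge k$ on the needed blow-up), at the price of an explicit rate: the paper's argument yields the polynomial saving $O(n^{\ell-1})$, whereas the blow-up lemma, which rests on the Erd\H os box theorem, gives only a much weaker quantitative form of $o(n^\ell)$. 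Both are more than enough for the application in Lemma \ref{lemma:rmvodd}, which only needs $o(n^\ell)$. One caveat: your second, minimum-degree alternative is not a complete argument as stated --- a non-bipartite $C_k$-free graph of linear minimum degree and odd girth at most $\ell$ does not obviously yield a contradiction without further work, and your proposed fix loops back to the supersaturation input --- but since you present it only as an optional variant, the proof stands on the first route.
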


Gishboliner and Shapira \cite{GS20} showed the stronger result that there is an absolute constant $c$, such that for $1\leq a <b$
$$\ex(n,C_{2a+1},C_{2b+1})\leq \begin{cases} cb^2n^{1+1/b}& \text{if } a=1\\
c(2a+1)^{2a}(2b +1)^{a+1}n^a &\text{if } a\geq 2.\end{cases}$$

We give a short proof of Lemma \ref{prop:l,k-2} for completeness.
\begin{proof}
    Let $G$ be a $C_k$-free graph on $n$ vertices. Let $P=vv_1v_2\dots v_{\ell-4}u$ be a path of length $\ell-3$ (note that if $\ell=3$, then $v=u$), let $A=(N(u)\setminus N(v))\setminus V(P)$, $B=(N(v)\setminus N(u))\setminus V(P)$ and $C=(N(u)\cap N(v))\setminus V(P)$. Now observe that the number of $\ell$-cycles containing $P$ is at most $e(A,B)+e(A,C)+e(B,C)+2e(C)$. If any of the bipartite graphs defined by the edges of $(A,B)$, $(A,C)$ or $(B,C)$, or the graph defined by the edges of $C$ contains a path of length $k-\ell+1$, then this path and $P$ form a cycle of length $k$, which is a contradiction. A graph on $m$ vertices without a path of length $k-\ell+1$ can contain at most $\frac{k-\ell}{2}m$ edges (see \cite{EG59}), so the number of $k$-cycles containing $P$ is at most
    $$e(A,B)+e(A,C)+e(B,C)+2e(C)\leq \frac{k-\ell}{2}(|A|+|B|+|A|+|C|+|B|+|C|+2|C|)\leq 2(k-\ell)n.$$
    The number of paths of length $\ell-3$ is at most $n^{\ell-2}$, hence $\ex(n,C_\ell,C_k)\leq 2(k-\ell)n^{\ell-1}$.\end{proof}

\begin{lemma} \label{lemma:rmvodd}
    For each odd integer $k\geq 7$ and $\delta>0$ there exists $n_0$ such that if $G$ is a $C_{k-2}$-free graph on $n\geq n_0$ vertices, then it is possible to delete edges from $G$ to obtain a graph $G'$ that is $C_\ell$-free for all odd $3\leq \ell <k$, such that the number of $k$-cycles in $G'$ and the number of $k$-cycles in $G$ differ by at most $\delta (n/k)^k$.
\end{lemma}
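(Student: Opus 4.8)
The plan is to delete short odd cycles one length at a time using the Graph Removal Lemma (Lemma~\ref{lemma:grl}), starting from the observation that a $C_{k-2}$-free graph already contains very few copies of each shorter odd cycle. Precisely: since $G$ is $C_{k-2}$-free, Lemma~\ref{prop:l,k-2} applied with $k-2$ in place of $k$ shows that for every odd $\ell$ with $3\le \ell\le k-4$, the graph $G$ contains only $o(n^\ell)$ copies of $C_\ell$, where the implied constant depends only on $k$ and $\ell$ (which are fixed). Note that the odd lengths $\ell$ with $3\le \ell<k$ are exactly $3,5,\dots,k-4$ together with $k-2$, and the last one requires no work since $G$, and hence any subgraph of $G$, is $C_{k-2}$-free.

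First I would fix $\epsilon_0:=2\delta/k^{k+1}$ (any sufficiently small constant depending on $k$ and $\delta$ will do), and let $\delta_0>0$ be a constant that works for Lemma~\ref{lemma:grl} simultaneously for $H=C_\ell$ for all $\ell\in\{3,5,\dots,k-4\}$ (take the minimum of the finitely many constants). Then I iterate: list the relevant lengths as $\ell_1<\dots<\ell_t$ where $\ell_i=2i+1$ and $t=(k-5)/2$, set $G_0:=G$, and having constructed $G_{i-1}$, note that $G_{i-1}$ contains no more copies of $C_{\ell_i}$ than $G$ does, hence at most $o(n^{\ell_i})\le \delta_0 n^{\ell_i}$ copies once $n\ge n_0$ is large enough; apply Lemma~\ref{lemma:grl} to delete at most $\epsilon_0 n^2$ edges and obtain a $C_{\ell_i}$-free graph $G_i$. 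Since deleting edges never creates a cycle, $G_i$ is still $C_{\ell_j}$-free for every $j<i$ and still $C_{k-2}$-free. After all $t$ steps, $G':=G_t$ is $C_\ell$-free for every odd $3\le\ell\le k-4$ and is $C_{k-2}$-free, hence $C_\ell$-free for all odd $3\le\ell<k$, as required.

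To control the number of $k$-cycles, observe that we deleted at most $t\epsilon_0 n^2\le k\epsilon_0 n^2$ edges in total, that every destroyed copy of $C_k$ must use at least one deleted edge, and that a fixed edge lies in at most $n^{k-2}$ copies of $C_k$ (choose the remaining $k-2$ vertices in order). Therefore the numbers of $k$-cycles in $G$ and in $G'$ differ by at most $k\epsilon_0 n^2\cdot n^{k-2}=k\epsilon_0 n^k\le \delta(n/k)^k$ by the choice of $\epsilon_0$. Finally $n_0$ is chosen large enough that Lemma~\ref{lemma:grl} applies and $o(n^{\ell_i})\le\delta_0 n^{\ell_i}$ holds for every $i$; since $t$ depends only on $k$, a single $n_0$ suffices.

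I expect no serious obstacle here: the argument is essentially an iterated application of the Removal Lemma. The only two points needing care are that iterating does not reintroduce shorter odd cycles (which holds because we only ever delete edges), and the bookkeeping that forces $\epsilon_0$ to be chosen small in terms of $k$, so that (number of deleted edges)\,$\times$\,(maximal number of $k$-cycles through an edge) stays below $\delta(n/k)^k$.
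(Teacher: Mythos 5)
Your proposal is correct and follows essentially the same route as the paper: use Lemma~\ref{prop:l,k-2} to see that a $C_{k-2}$-free graph has $o(n^{\ell})$ copies of each shorter odd cycle, apply the Removal Lemma for each odd $\ell<k-2$ (edge deletion cannot create cycles, so the removals combine), and bound the lost $k$-cycles by (edges deleted)$\times n^{k-2}$. The only blemish is a factor-of-two slip in the bookkeeping: with $\epsilon_0=2\delta/k^{k+1}$ the crude bound $t\le k$ yields $2\delta(n/k)^k$ rather than $\delta(n/k)^k$, but since $t=(k-5)/2<k/2$ the stated conclusion still holds, so nothing substantive is affected.
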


\begin{proof}
    Let $\epsilon_0=\delta/k^{k+1}$. By the Graph Removal Lemma, there exists $\delta_0>0$ such that, for every $3\leq \ell\leq k-1$, any $n$-vertex graph with at most $\delta_0 n^{\ell}$ copies of $C_{\ell}$ may be made $C_{\ell}$-free by removing at most $\epsilon_0 n^2$ edges. Choose $n_0$ such that for all $n\geq n_0$ we have $\ex(n,C_{\ell},C_{k-2})\leq \delta_0 n^{\ell}$ for all odd numbers $3\leq \ell < k$; this is possible by Lemma \ref{prop:l,k-2}. Let $G$ be a $C_{k-2}$-free graph on $n\geq n_0$ vertices. Now for a given odd number $3\leq \ell <k-2$, $G$ may be made $C_{\ell}$-free by removing at most $\epsilon_0 n^2$ edges, hence $G$ may be made $C_\ell$-free for all odd $3\leq \ell <k-2$ by removing at most $(k-3)\cdot \epsilon_0 n^2<(\delta/k^k) \cdot n^2$ edges. As any given edge of $G$ is contained in at most $n^{k-2}$ $k$-cycles, the number of $k$-cycles in the new graph and the number of $k$-cycles in $G$ differ by at most $\delta (n/k)^k$.
\end{proof}

The following definitions, Lemma \ref{lem:weightsandkcycles} and Lemma \ref{claim:GK} closely follow the proof of the main result of Grzesik and Kielak \cite{GK22}.

Fix an odd integer $k\ge 7$ and let $G'$ be any $n$-vertex graph without $C_{\ell}$ for all odd $\ell$ between $3$ and $k-2$. Since there are no odd cycles of length smaller than $k$, each $k$-cycle in $G'$ is induced.

Call a sequence $D = (z_i)_{i=0}^{k-1}$ of vertices of $G'$ \textit{good} if $z_0z_1z_3z_2z_4\dots z_{k-1}$ is a cycle (so $z_2$ and $z_3$ are in reverse order). For a fixed good sequence $D$, we define the following sets:
\begin{align*}
    A_0(D)&=V(G'),\\
    A_1(D)&=N(z_0),\\
    A_2(D)&=\{w\not\in N(z_0):\textrm{dist}(z_1,w)=2\},\\
    A_3(D)&=N(z_1)\cap N(z_2),\\
    A_4(D)&=\{w:z_0z_1z_3z_2w \text{ is an induced path}\},\\
    A_i(D)&=\{w:z_0z_1z_3z_2z_4\dots z_{i-1}w \text{ is an induced path}\}\text{ for }5\le i\le k-2,\\
    A_{k-1}(D)&=\{w:z_0z_1z_3z_2z_4\dots z_{k-2}w \text{ is an induced cycle}\}.
\end{align*}

Define the weight $w(D)$ of a good sequence $D$ as
\begin{align*}
    w(D)=\prod_{i=0}^{k-1}|A_i(D)|^{-1}=\frac{1}{n}\prod_{i=1}^{k-1}|A_i(D)|^{-1}.
\end{align*}

Note that the set $A_i(D)$ only depends on $z_0,z_1,\dots, z_{i-1}$, so write $A_i(D)=A_i(z_0,z_1,\dots,z_{i-1})$. Consider the following process of sampling a sequence of vertices $(w_i)_{i=0}^{k-1}$. Start with choosing $w_0$ randomly from $V(G')=A_0$. If we have already chosen $w_0,\dots,w_{i-1}$, choose $w_i$ randomly from $A_i(w_0,\dots,w_{i-1})$ (if this set is empty, terminate the process). Then, $w(D)$ is just the probability that the sequence $(w_i)_{i=0}^{k-1}$ obtained in this random process is equal to $D$. In particular, the sum of weights of all good sequences is at most one, since it is the sum of probabilities of pairwise disjoint events.

Fix a $k$-cycle $v_0v_1\dots v_{k-1}$ in $G'$, let $C=\{v_0,v_1,\dots,v_{k-1}\}$ be the set of its vertices and, for $0\leq j\leq k-1$, let $D_j=(v_j,v_{j+1},v_{j+3},v_{j+2},v_{j+4},\dots,v_{j+k-1})$,  where the indices are considered modulo $k$. Note that the sets $D_j$ are the good sequences with the same orientation corresponding to this cycle (they are half the total number of good sequences corresponding to this cycle; the other half corresponds to the good sequences with the reverse orientation).

The following lemma, which is implicit in \cite{GK22}, relates the weights of good sequences to the number of $k$-cycles.

\begin{lemma} \label{lem:weightsandkcycles}
    Let $G'$ be and $D_j$ be defined as above. Then the number of $k$-cycles in $G'$ is at most the maximum of $\left(2\sum_{j=0}^{k-1}w(D_j)\right)^{-1}$ over all $k$-cycles in $G'$.
\end{lemma}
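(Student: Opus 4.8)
The plan is to combine the fact established above — that the weights of all good sequences sum to at most $1$ — with a $2k$-to-$1$ correspondence between good sequences and $k$-cycles, losing only a factor of $2$ to the two orientations of a cycle.

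First I would check that this correspondence really is $2k$-to-$1$. Since $G'$ has no $C_\ell$ for odd $\ell$ with $3\le\ell\le k-2$, a chord of a $k$-cycle (with $k$ odd) would split it into cycles of lengths $a$ and $b$ with $a+b=k+2$ and $a,b\ge 3$; one of $a,b$ is then odd and at most $k-2$, which is impossible. Hence every $k$-cycle of $G'$ is induced, so for each of its $2k$ oriented cyclic labellings $v_0v_1\dots v_{k-1}$ the sequence $(v_j,v_{j+1},v_{j+3},v_{j+2},v_{j+4},\dots,v_{j+k-1})$ (indices mod $k$) is good; conversely a good sequence $D=(z_i)$ determines the induced $k$-cycle $z_0z_1z_3z_2z_4\dots z_{k-1}$ of which it is one such labelling. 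Thus sending a good sequence to its underlying cycle is a $2k$-to-$1$ surjection onto the set of $k$-cycles, and the $2k$ good sequences lying over a fixed $k$-cycle $C$ are exactly $D_0,\dots,D_{k-1}$ together with the sequences $D_0',\dots,D_{k-1}'$ coming from traversing $C$ the other way.

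Next I would split the total weight along this map. With $N$ the number of $k$-cycles,
\[
1\ \ge\ \sum_{D\text{ good}}w(D)\ =\ \sum_{C}\Bigl(\sum_{j=0}^{k-1}w(D_j)+\sum_{j=0}^{k-1}w(D_j')\Bigr),
\]
the sum being over the $N$ cycles $C$. For each $C$, at least one of the two orientations has $\sum_j w(D_j)\le\tfrac12\bigl(\sum_j w(D_j)+\sum_j w(D_j')\bigr)$; picking that orientation and calling the resulting value $s(C)$, we obtain $\sum_C 2\,s(C)\le 1$, hence $N\cdot\min_C 2\,s(C)\le 1$, i.e.
\[
N\ \le\ \Bigl(\min_C 2\,s(C)\Bigr)^{-1}=\max_C\bigl(2\,s(C)\bigr)^{-1}.
\]
Since for a fixed $C$ the quantity $\bigl(2\sum_{j=0}^{k-1}w(D_j)\bigr)^{-1}$ is largest for precisely the cheaper of the two orientations, the right-hand side equals the maximum of $\bigl(2\sum_{j=0}^{k-1}w(D_j)\bigr)^{-1}$ over all $k$-cycles of $G'$, which is the assertion.

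The one point requiring care is the orientation bookkeeping: the weight $w(D)$ is not invariant under reversing a cycle (e.g.\ $A_2(D)$ only sees the vertex immediately after $z_0$), so one cannot simply claim that $C$ contributes $2\sum_j w(D_j)$ to the total weight; the averaging over the two orientations above is what recovers the factor of $2$. The remaining ingredients — inducedness of $k$-cycles, the exact $2k$-to-$1$ count, and the disjointness of the fibres over distinct cycles — are routine.
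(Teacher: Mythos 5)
Your proof is correct and follows essentially the same double-counting argument as the paper: both combine the fact that the total weight of all good sequences is at most $1$ with the decomposition of that total over the $2k$ good sequences lying above each $k$-cycle. Your explicit orientation bookkeeping via $s(C)$ is the same step the paper performs by summing its inequality $\sum_{j}w(D_j)\geq (2N)^{-1}$ over both orientations of each cycle.
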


\begin{proof}
Let $N$ be the maximum of $(2\sum_{j=0}^{k-1}w(D_j))^{-1}$ over all $k$-cycles in $G'$. Then $\sum_{j=0}^{k-1}w(D_j)\geq (2N)^{-1}$ holds for any $k$-cycle in $G'$. Summing this over all $k$-cycles (with both orientations) we get an inequality in which the left hand side is upper bounded by one (since the sum of the weights of all good sequences is at most one), while the right hand side becomes $N^{-1}$ times the number of $k$-cycles in $G'$. Hence, we conclude that the total number of $k$-cycles is upper bounded by $N$.
\end{proof}

Still for a fixed $k$-cycle in $G'$, let $n_{i,j}=|A_i(D_j)|$. Using the inequality between the harmonic mean and the geometric mean of $k$ terms
and the inequality between the geometric mean and the arithmetic mean of $k(k - 1)$ terms, we obtain
\begin{align}
    \left(2\sum_{j=0}^{k-1}w(D_j)\right)^{-1}&=\left(2\sum_{j=0}^{k-1}\prod_{i=0}^{k-1}n_{i,j}^{-1}\right)^{-1} \nonumber \\
    &=n\left(\sum_{j=0}^{k-1}\left(\frac{n_{1,j}}{2}\right)^{-1}\prod_{i=2}^{k-1}n_{i,j}^{-1}\right)^{-1} \nonumber \\
    &\leq \frac{n}{k}\left(\prod_{j=0}^{k-1}\frac{n_{1,j}}{2}\prod_{i=2}^{k-1}n_{i,j}\right)^{\frac{1}{k}} \nonumber \\
    &\leq \frac{n}{k}\left(\frac{1}{k(k-1)}\sum_{j=0}^{k-1}\left(\frac{n_{1,j}}{2}+\sum_{i=2}^{k-1}n_{i,j}\right)\right)^{k-1}. \label{eqn:HMAMGM}
\end{align}

Notice that any vertex $w \in V(G')$ has at most 2 neighbours in $C$, otherwise $G'$ has a shorter odd cycle. The following definition plays a key role in our proof.

\begin{definition}
    Let $U(C)=\{w\in V(G'):|N(w)\cap C|=2\}$, and let $M_{G'}$ be the maximum of $|U(C)|/n$ over all $k$-cycles in $G'$.
\end{definition}

\begin{lemma} \label{claim:GK}
    The following inequality holds for any $k$-cycle in $G'$ (with $n_{i,j}$ defined as above):
    \begin{align*}
    \sum_{j=0}^{k-1}\left(\frac{n_{1,j}}{2}+\sum_{i=2}^{k-1}n_{i,j}\right)\leq n(k-2+M_{G'}).
\end{align*}
\end{lemma}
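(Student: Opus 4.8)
The plan is to fix an arbitrary $k$-cycle $C=v_0v_1\dots v_{k-1}$ in $G'$ (necessarily induced, since $G'$ has no shorter odd cycle) and to rewrite the left-hand side as a sum over vertices. Interchanging summation and using $n_{1,j}=|N(v_j)|$ and $n_{i,j}=|A_i(D_j)|$,
$$\sum_{j=0}^{k-1}\Bigl(\tfrac{n_{1,j}}{2}+\sum_{i=2}^{k-1}n_{i,j}\Bigr)=\sum_{w\in V(G')}f(w),\qquad f(w):=\tfrac12|N(w)\cap C|+\sum_{i=2}^{k-1}\#\{j:w\in A_i(D_j)\}.$$
Every vertex has at most two neighbours on $C$ (else $G'$ has a short odd cycle), so I would split $V(G')$ into $C$ itself (each of its vertices having exactly two neighbours on $C$, as $C$ is induced) together with the sets $N_0,N_1,N_2$ of off-cycle vertices having $0,1,2$ neighbours on $C$. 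Then $U(C)=C\cup N_2$ and $|U(C)|=k+|N_2|\le nM_{G'}$, so it suffices to prove $\sum_w f(w)\le n(k-2)+|U(C)|$. I would get this from the pointwise bounds $f(w)\le k-2$ for $w\notin U(C)$ and $f(w)\le k-1$ for $w\in U(C)$, whose sum is $(k-2)(n-|U(C)|)+(k-1)|U(C)|=n(k-2)+|U(C)|$.

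To bound $f(w)$ I would analyse each level $i$ in turn, using a handful of structural facts; each is proved by splicing a short path through the relevant off-cycle vertex (or vertices) onto an arc of $C$ and extracting a forbidden cycle from a parity count. First, any vertex with two neighbours on $C$ has them at cycle-distance exactly $2$: a length-$2$ path through it closes with the two arcs of $C$ into cycles of lengths $d+2$ and $k-d+2$ of opposite parity, so one of them is a forbidden short odd cycle unless $d=2$. It follows that for each $i\in\{4,\dots,k-2\}$ at most one $j$ satisfies $w\in A_i(D_j)$ — none if $w\in N_0$, and for $w\in N_2\cup C$ the induced-path condition eliminates the second candidate $j$ — and that $i=3,k-1$ contribute at most one $j$ and nothing unless $w$ has two cycle-neighbours; so levels $3,\dots,k-1$ contribute at most $k-5$ when $w\in N_0\cup N_1$ and exactly $k-3$ when $w\in N_2\cup C$. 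Secondly, writing $X_w$ for the set of cycle vertices at distance exactly $2$ from $w$, level $i=2$ contributes $|X_w|$ for $w\in N_0$, $|X_w|-1$ for $w\in N_1$, and exactly $1$ for $w\in N_2\cup C$ (in the last case $X_w$ works out to be a concrete $3$-set, only one element of which falls outside the forbidden index window). Finally I would bound $|X_w|$: it contains no two consecutive cycle vertices (otherwise a triangle or a $C_5$ appears), which already gives $|X_w|\le(k-1)/2$; and for $w\in N_1$ with cycle-neighbour $v_a$, a parity argument confines $X_w$ to $\{v_{a-3},v_{a-1},v_{a+1},v_{a+3}\}$, while $v_{a-3}$ and $v_{a+3}$ cannot both lie in $X_w$ — their common neighbours with $w$ splice into a path of length $4$ which, closed with the long arc of $C$, is a cycle of length exactly $k-2$, forbidden because $G'$ is $C_{k-2}$-free — so $|X_w|\le3$ there. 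Assembling the three pieces: $f(w)=|X_w|\le(k-1)/2\le k-2$ for $w\in N_0$; $f(w)\le\tfrac12+(k-5)+(|X_w|-1)\le k-\tfrac52$ for $w\in N_1$; and $f(w)=1+(k-3)+1=k-1$ for $w\in N_2$ and for $w\in C$. This gives the pointwise bounds, and summing finishes the proof.

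I expect the crux to be the last structural fact, namely ruling out $|X_w|=4$ for $w\in N_1$: this is the single place where $C_{k-2}$-freeness is used, rather than merely the absence of shorter odd cycles, and the $+M_{G'}$ term is genuinely needed — in the balanced blow-up of $C_k$ every vertex has $f(w)=k-1$, so there $\sum_w f(w)=n(k-1)$. The remaining work — verifying in each level $i$ that the ``forbidden window'' inside $A_i(D_j)$ really does pin the number of admissible $j$ down to one, and collating the four vertex types — is a routine if somewhat lengthy case analysis once the structural facts are in place.
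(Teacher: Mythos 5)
Your proof is correct and follows essentially the same route as the paper's: both arguments bound the contribution of each vertex to the double sum by $k-1$ if it lies in $U(C)$ and by $k-2$ otherwise, using the facts that two cycle-neighbours must be at cycle-distance $2$ and that at most three cycle vertices lie at distance exactly $2$ from any vertex. The only cosmetic difference is that you derive the bound $|X_w|\le 3$ for vertices with one cycle-neighbour from the explicit candidate set $\{v_{a\pm1},v_{a\pm3}\}$, whereas the paper proves the same fact (its property $(\star)$) for all vertices at once by a parity count over four arcs.
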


\begin{proof} It is enough to prove that the contribution to the above sum of any vertex $w\in U(C)$ is at most $k-1$ and the contribution of any vertex $w\not\in U(C)$ is at most $k-2$ (here we say that a vertex $w$ contributes to $n_{i,j}$ if $w\in A_i(D_j)$).

Since $G'$ contains no odd cycle of length less than $k$, any vertex $w\in V(G')$ has at most 2 neighbours in $C$, and if $w$ has two neighbours in $C$, then those neighbours must have distance two on the cycle. Furthermore, again since $G'$ has no odd cycle of length less than $k$, each vertex $w$ satisfies the following property:

($\star$) There are at most three vertices in $C$ at distance exactly 2 from $w$, and any two such vertices are not adjacent.

To see that ($\star$) holds, note that if there are two adjacent vertices which are of distance two from $w$, then we obtain a circuit of length $5$, which must contain an odd cycle of length at most $5$. Moreover, assume for the sake of contradiction that there are at least $4$ vertices in $C$ of distance two from $w$, let us call them $x_1,x_2,x_3,x_4$ in their clockwise order on the cycle. Let $P_i$ be a path of length two from $w$ to $x_i$ and let $Q_i$ be the clockwise arc from $x_i$ to $x_{i+1}$ on the cycle (with indices considered modulo $4$). Each $Q_i$ has length at least $2$, so (as the sum of the lengths of the four arcs is $k$) each arc $Q_i$ has length at most $k-6$. Furthermore, at least one of them has odd length, say $Q_j$ does. Then the circuit consisting of $P_j$, $Q_j$ and $P_{j+1}$ has odd length at most $k-2$, so it must contain an odd cycle of length at most $k-2$, which is a contradiction.

Now let us see how much a vertex can contribute to the sum $\sum_{j=0}^{k-1}\left(\frac{n_{1,j}}{2}+\sum_{i=2}^{k-1}n_{i,j}\right)$.

If $w$ has no neighbours in $C$, then, for each $j$, it can contribute only to
$n_{2,j}$. Moreover, if for some $j$ we have $\textrm{dist}(w, v_j ) = 2$, then $\textrm{dist}(w, v_{j-1}) > 2$ and $\textrm{dist}(w, v_{j+1}) > 2$ by ($\star$), and so $w$ does not contribute to $n_{2,j-2}$ and $n_{2,j}$. Therefore, such $w$ contributes in total by at most $k-2$.

Assume, now, that $w$ has exactly one neighbour in $C$ - by symmetry, we may assume that this neighbour is $v_0$. Because of having only one neighbour, $w$ does not contribute
to $n_{3,j}$ or to $n_{k-1,j}$ for any $j$. In order to contribute to $n_{i,j}$ for $i \not\in \{2, 3, k -1\}$, $w$ needs
to be adjacent to $v_{i+j-1}$, and so it can contribute only to $n_{1,0}$ and $n_{i,k-i+1}$ for $4\leq i\leq k-2$. Finally, $w$ can contribute to $n_{2,j}$ only if $\textrm{dist}(w, v_{j+1}) = 2$ and $w\not\in N(v_j )$. By ($\star$), there are at most three vertices in $C$ at distance $2$ from $w$, but one of them is $v_1$ and $w\in N(v_0)$, so $w$ contributes to $\sum_{j=0}^{k-1}n_{2,j}$ by at most $2$. It follows that in this case $w$ contributes to the considered sum in total by at most $k - 3 + \frac{1}{2}\leq k-2$.

Finally, assume that $w$ has exactly two neighbours in $C$. As noted above, these neighbours
have to be at distance 2 in $C$. By symmetry, we may assume that they are $v_{k-1}$ and $v_1$. Then, $\textrm{dist}(w, v_i) = 2$ for $i \in \{ k - 2, 0, 2 \}$ (or $w=v_0$), and there are no more $i$ with this property by ($\star$). Therefore, $w$ contributes only to $n_{1,k-1}, n_{1,1}, n_{2,k-3}, n_{3,k-2}$, and $n_{i,k-i}$ for $4\leq i\leq k-1$, hence $w$ contributes to the considered sum in total by $k-1$.
\end{proof}

The upshot of the above is the following result.

\begin{lemma} \label{lem:relate cycles and M}
    Let $k\geq 7$ be odd and let $G'$ be a graph on $n$ vertices that is $C_\ell$-free for all odd $3\leq \ell<k$. Then the number of $k$-cycles in $G'$ is at most  $(1-\frac{1-M_{G'}}{k-1})^{k-1}\frac{n^k}{k^k}$.
\end{lemma}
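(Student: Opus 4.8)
The plan is to combine the three ingredients already assembled: Lemma~\ref{lem:weightsandkcycles}, the harmonic--geometric--arithmetic mean chain \eqref{eqn:HMAMGM}, and Lemma~\ref{claim:GK}. These fit together with no further combinatorial input, so the argument is essentially a short computation.

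First I would fix an arbitrary $k$-cycle in $G'$ and keep the notation $n_{i,j}=|A_i(D_j)|$ from above. By \eqref{eqn:HMAMGM},
$$\left(2\sum_{j=0}^{k-1}w(D_j)\right)^{-1}\leq \frac{n}{k}\left(\frac{1}{k(k-1)}\sum_{j=0}^{k-1}\left(\frac{n_{1,j}}{2}+\sum_{i=2}^{k-1}n_{i,j}\right)\right)^{k-1}.$$
Substituting the bound $\sum_{j=0}^{k-1}\left(\frac{n_{1,j}}{2}+\sum_{i=2}^{k-1}n_{i,j}\right)\leq n(k-2+M_{G'})$ from Lemma~\ref{claim:GK} and simplifying, the right-hand side becomes
$$\frac{n}{k}\left(\frac{n(k-2+M_{G'})}{k(k-1)}\right)^{k-1}=\frac{n^k}{k^k}\left(\frac{k-2+M_{G'}}{k-1}\right)^{k-1}=\left(1-\frac{1-M_{G'}}{k-1}\right)^{k-1}\frac{n^k}{k^k}.$$
Since this holds for every $k$-cycle in $G'$, Lemma~\ref{lem:weightsandkcycles} — which bounds the number of $k$-cycles by the maximum of $\left(2\sum_{j=0}^{k-1}w(D_j)\right)^{-1}$ taken over all $k$-cycles — immediately yields the claimed bound.

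I do not expect a genuine obstacle here: all the real work (the weighting scheme and its random-sampling interpretation bounding the total weight by one, the two mean inequalities, and above all the per-vertex contribution analysis of Lemma~\ref{claim:GK}) has already been done. The only points requiring care are the algebraic identity $\frac{k-2+M_{G'}}{k-1}=1-\frac{1-M_{G'}}{k-1}$ and bookkeeping of the powers of $n$ and $k$; one may also observe that $M_{G'}\le 1$ trivially, since $U(C)\subseteq V(G')$ for every $k$-cycle, so the base of the exponent lies in $[\tfrac{k-2}{k-1},1]$, although this is not needed for the statement.
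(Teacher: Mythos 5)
Your proof is correct and follows exactly the same route as the paper's own proof: combine Lemma~\ref{lem:weightsandkcycles}, inequality (\ref{eqn:HMAMGM}), and Lemma~\ref{claim:GK}, then simplify using $\frac{k-2+M_{G'}}{k-1}=1-\frac{1-M_{G'}}{k-1}$. Nothing is missing and the algebra checks out.
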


\begin{proof}
Using Lemma \ref{lem:weightsandkcycles}, equation (\ref{eqn:HMAMGM}) and Lemma \ref{claim:GK}, the number of $k$-cycles in $G'$ is at most
\begin{align*}
    \frac{n}{k}\left(\frac{1}{k(k-1)} n(k-2+M_{G'})\right)^{k-1}=\left(1-\frac{1-M_{G'}}{k-1}\right)^{k-1}\frac{n^k}{k^k},
\end{align*}
as desired.
\end{proof}

Our next result gives very strong structural information about $\{C_3,C_5,\dots,C_{k-2}\}$-free $n$-vertex graphs with nearly $(n/k)^k$ many $k$-cycles. It can be viewed as a stability version of the main result of Grzesik and Kielak \cite{GK22}.

\begin{lemma}\label{lemma:dense}
For every odd $k\geq 7$, $\epsilon_1,\epsilon_2>0$ there exists $\delta>0$ such that if $G'$ is a graph on $n$ vertices that is $C_\ell$-free for all odd $3\leq \ell<k$, and $G'$ has at least $(1-\delta)(n/k)^k$ $k$-cycles, then $G'$ contains a set $U$ of vertices with $|U|\geq (1-\epsilon_1)n$, and a partition into independent sets $U=U_0\cup U_1\cup \dots\cup U_{k-1}$ such that for all $0\leq i<j\leq k-1$, 

 \begin{itemize}
     \item if $i-j\equiv \pm 1\pmod{k}$, then $\rho(U_i,U_j)\geq 1-\epsilon_2$,
     \item otherwise $\rho(U_i,U_j)=0$.
 \end{itemize}
\end{lemma}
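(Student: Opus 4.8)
The plan is to feed the hypothesis straight into Lemma~\ref{lem:relate cycles and M}, which already packages almost all the information we need, and then read off the partition from a near-optimal $k$-cycle. Since $G'$ has at least $(1-\delta)(n/k)^k$ $k$-cycles, Lemma~\ref{lem:relate cycles and M} gives $\bigl(1-\tfrac{1-M_{G'}}{k-1}\bigr)^{k-1}\ge 1-\delta$, and using $(1-x)^{k-1}\le e^{-(k-1)x}$ this forces $M_{G'}\ge 1-2\delta$ (for $\delta\le 1/2$). So we may fix a $k$-cycle $C=v_0v_1\dots v_{k-1}$ in $G'$ with $|U(C)|=M_{G'}n\ge(1-2\delta)n$, and set $U:=U(C)$. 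By the analysis in the proof of Lemma~\ref{claim:GK}, every $w\in U$ has exactly two neighbours in $C$ and they lie at distance $2$ on $C$; we let $U_i$ consist of those $w\in U$ with $N(w)\cap C=\{v_{i-1},v_{i+1}\}$. This is a partition $U=U_0\cup\dots\cup U_{k-1}$, each part is nonempty since $v_i\in U_i$, and $|U|\ge(1-\epsilon_1)n$ provided $\delta\le\epsilon_1/2$.

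Next I would establish the two structural statements. That each $U_i$ is independent is immediate: two adjacent vertices of $U_i$, together with their common neighbour $v_{i-1}$, would form a triangle. To see that $\rho(U_i,U_j)=0$ whenever $i\not\equiv j,j\pm1\pmod k$, suppose $ww'\in E(G')$ with $w\in U_i$ and $w'\in U_j$. If $w$ (or, symmetrically, $w'$) lies on $C$, then necessarily $w=v_i$ by matching neighbourhoods in $C$, and then either $w'=v_j$ with $v_iv_j\in E(G')$ — contradicting that $C$ is induced and $i\not\equiv j\pm1$ — or else $v_i\in N(w')\cap C=\{v_{j-1},v_{j+1}\}$, again a contradiction. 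So $w,w'\notin C$. Now for any $a\in\{i-1,i+1\}$ and $b\in\{j-1,j+1\}$, either $v_a=v_b$ (and then $v_aww'$ is a triangle) or $v_aww'v_b$ is a path of length $3$ which, closed up by one of the two arcs of $C$ between $v_a$ and $v_b$, gives a cycle; the lengths of the two resulting cycles are $3+\ell$ and $3+(k-\ell)$ for the two complementary arc lengths $\ell,k-\ell$. Writing $t=(i-j)\bmod k\in\{2,\dots,k-2\}$, the three possible values of the offset $(a-b)\bmod k$ are $t-2,\,t,\,t+2$; choosing the offset $t-2$ when $t$ is even and $t+2$ when $t$ is odd, and using that $k$ is odd so exactly one of $3+\ell$, $3+(k-\ell)$ is odd, one checks that in every case we obtain either a triangle or an odd cycle of length between $5$ and $k-2$ — impossible in $G'$.

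With these in hand, $G'[U]$ is a subgraph of the complete blow-up of $C_k$ with parts $U_0,\dots,U_{k-1}$. A closed walk of length $k$ that moves by $\pm1\pmod k$ at each step must move in a single direction — the $k$ consecutive differences, each $\pm1$, sum to a multiple of $k$ which is odd, hence to $\pm k$ — so it visits all $k$ residues. Consequently every $k$-cycle of $G'[U]$ uses exactly one vertex of each part, and the number of such cycles equals $Q:=\bigl|\{(w_0,\dots,w_{k-1})\in U_0\times\dots\times U_{k-1}:w_iw_{i+1}\in E(G')\text{ for all }i\bmod k\}\bigr|$. Fixing the edge $w_0w_1$ and bounding the remaining coordinates trivially gives $Q\le e(U_0,U_1)\prod_{i=2}^{k-1}|U_i|=\rho(U_0,U_1)\prod_i|U_i|$, and by cyclic symmetry $Q\le\bigl(\min_j\rho(U_j,U_{j+1})\bigr)\prod_i|U_i|\le\bigl(\min_j\rho(U_j,U_{j+1})\bigr)(n/k)^k$ by AM--GM. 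On the other hand, every $k$-cycle of $G'$ outside $G'[U]$ passes through one of the at most $2\delta n$ vertices not in $U$, and at most $n^{k-1}$ $k$-cycles pass through any given vertex, so $Q\ge(1-\delta)(n/k)^k-2\delta n^k$. Combining, $\min_j\rho(U_j,U_{j+1})\ge(1-\delta)-2\delta k^k$, which is at least $1-\epsilon_2$ once $\delta\le\epsilon_2/(1+2k^k)$; taking $\delta=\min\{1/2,\ \epsilon_1/2,\ \epsilon_2/(1+2k^k)\}$ completes the proof.

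I expect the case analysis in the second paragraph — verifying that a putative edge between non-consecutive parts always yields a short odd cycle — to be the only genuinely technical point; the rest, including the counting in the third paragraph which is really the conceptual heart, is short and fairly mechanical once the partition extracted from $M_{G'}\approx1$ is in place.
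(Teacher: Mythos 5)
Your proposal is correct and follows essentially the same route as the paper: extract a $k$-cycle $C$ with $|U(C)|$ close to $n$ from Lemma \ref{lem:relate cycles and M}, partition $U(C)$ according to the pair of neighbours on $C$, rule out edges inside parts and between non-consecutive parts via short odd cycles, and then lower-bound each $\rho(U_i,U_{i+1})$ by counting $k$-cycles through an edge of $E(U_i,U_{i+1})$. The only cosmetic differences are your explicit $e^{-x}$ estimate to get $M_{G'}\ge 1-2\delta$ (the paper builds the analogous inequality directly into the choice of $\delta$) and your slightly more elaborate case analysis for cross edges, where the paper simply closes the edge $xy$ up with the two arcs of $C$ between $v_{i+1}$ and $v_{i+j-1}$ (resp.\ $v_{i-1}$ and $v_{i+j+1}$), noting that one of the lengths $j+1$ and $k-j+1$ is odd and both are less than $k$.
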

\begin{proof} Decrease $\epsilon_1$ if necessary such that $0<\epsilon_2-\epsilon_1k^k$ and take $\delta>0$ with $1-\delta>\left(1-\epsilon_1/(k-1)\right)^{k-1}$ and $1-\delta>1-(\epsilon_2-\epsilon_1k^k)$.

Then by considering the number of $k$-cycles in $G'$, we obtain, using Lemma \ref{lem:relate cycles and M}, that
\begin{align*}
    \left(1-\frac{\epsilon_1}{k-1}\right)^{k-1}\frac{n^k}{k^k}< (1-\delta)\frac{n^k}{k^k}\leq \left(1-\frac{1-M_{G'}}{k-1}\right)^{k-1}\frac{n^k}{k^k}.
\end{align*}
Hence $M_{G'}>1-\epsilon_1$, so there exists a $k$-cycle $v_0v_1\dots v_{k-1}$ such that for $C=\{v_0,v_1,\dots,v_{k-1}\}$ we have $|U(C)|>(1-\epsilon_1)n$. Let $U=U(C)$. Recall that if some $w\in V(G')$ has 2 neighbours in $C$, then they must be at distance 2 on the cycle $v_0v_1\dots v_{k-1}$. Hence $U$ is the disjoint union of the sets $U_i=\{w\in V(G'):N(w)\cap C=\{v_{i-1},v_{i+1}\}\}$ for $0\leq i\leq k-1$.

If for some $i$ there is an edge $xy$ in $U_i$, then $xyv_{i-1}$ is a triangle, which is a contradiction. If there is an edge $xy$ from $U_i$ to $U_{i+j}$ with $j\not\in\{0,1,k-1\}$, then $xv_{i+1}v_{i+2}\dots v_{i+j-1}y$ is a $(j+1)$-cycle and $xv_{i-1}v_{i-2}\dots v_{i+j+1}y$ is a $(k-j+1)$-cycle. As both $j+1$ and $k-j+1$ are less than $k$ and one of them is odd, this is a contradiction.

 Suppose for the sake of contradiction that for some $0\leq i\leq k-1$, $\rho(U_i,U_{i+1})\leq 1-\epsilon_2$. Note that each $k$-cycle contained in $U$ must have an edge from $E(U_i,U_{i+1})$ and a vertex from each $U_j$ for $j\not\in\{ i,i+1\}$, and this edge and the $k-2$ vertices uniquely determine the $k$-cycle. Therefore there are at most $(1-\epsilon_2)\prod_{i=0}^{k-1}|U_i|$ $k$-cycles contained in $U$. This is at most $(1-\epsilon_2)(n/k)^k$ by AM-GM. The number of $k$-cycles in $G'$ that are not contained in $U$ is bounded by $\epsilon_1 n^k$, since there are at most $\epsilon_1 n$ vertices in $G'$ which are not in $U$. Therefore, considering the total number of $k$-cycles in $G'$, we obtain
 \begin{align*}
     (1-\delta)\frac{n^k}{k^k}\le (1-\epsilon_2)\frac{n^k}{k^k}+\epsilon_1 n^k=\left(1-(\epsilon_2-\epsilon_1k^k)\right)\frac{n^k}{k^k},
 \end{align*}
contradicting the assumptions about $\delta$. Hence the sets $U_0,U_1,\dots,U_{k-1}$ satisfy the conditions stated in the lemma. \end{proof}

We want to use the high edge density between the sets $U_i$ and $U_{i+1}$ to find large subsets $W_i\subset U_i$ which have large minimal degree to $W_{i\pm 1}$.

\begin{lemma}\label{lemma:mindeg}
For every odd $k\geq 7$, $\epsilon_3,\epsilon_4>0$ there exists $\delta>0$ such that if $G'$ is a graph on $n$ vertices that is $C_\ell$-free for all odd $3\leq \ell<k$, and $G'$ has at least $(1-\delta)(n/k)^k$ $k$-cycles, then $G'$ contains a set $W$ of vertices with $|W|\geq (1-\epsilon_3)n$, and a partition into independent sets $W=W_0\cup W_1\cup \dots\cup W_{k-1}$, such that  $|W_i|\geq n/(6k)$ and for all $0\leq i,j\leq k-1$ and $v\in W_i$,
 \begin{itemize}
     \item if $i-j\equiv \pm 1\pmod{k}$, then $\rho(v,W_j)\geq 1-\epsilon_4$,
     \item otherwise $\rho(v,W_j)=0$.
 \end{itemize}
\end{lemma}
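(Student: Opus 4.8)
The plan is to start from the conclusion of Lemma \ref{lemma:dense}, applied with parameters $\epsilon_1, \epsilon_2$ that are small compared to $\epsilon_3, \epsilon_4$ (to be fixed at the end), obtaining the set $U = U_0 \cup \dots \cup U_{k-1}$ with $|U| \geq (1-\epsilon_1)n$, the $U_i$ independent, $\rho(U_i, U_{i+1}) \geq 1 - \epsilon_2$ for consecutive indices, and $\rho(U_i, U_j) = 0$ otherwise. The first step is to show each $|U_i|$ is not too small: if some $|U_i| \leq n/(5k)$, say, then $\prod |U_j| \leq (n/(5k))\cdot(n/k)^{k-1} < (1/5)(n/k)^k$ by AM--GM on the remaining factors, so (as in the last part of the proof of Lemma \ref{lemma:dense}) the number of $k$-cycles in $U$ is at most $(1/5)(n/k)^k$, and adding the at most $\epsilon_1 n^k$ cycles meeting $V(G')\setminus U$ contradicts the assumed count of $(1-\delta)(n/k)^k$ once $\delta, \epsilon_1$ are small. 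Hence $|U_i| \geq n/(5k) > n/(6k)$ for all $i$, with a bit of room to spare.

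The second step is the density-to-min-degree cleaning. For consecutive indices $i, i+1$, call a vertex $v \in U_i$ \emph{bad towards} $U_{i+1}$ if $\rho(v, U_{i+1}) < 1 - \sqrt{\epsilon_2}$; since $\rho(U_i, U_{i+1}) \geq 1 - \epsilon_2$, a counting/Markov argument shows at most a $\sqrt{\epsilon_2}$ fraction of $U_i$ is bad towards $U_{i+1}$, and similarly at most a $\sqrt{\epsilon_2}$ fraction of $U_i$ is bad towards $U_{i-1}$. Let $W_i \subseteq U_i$ be the vertices of $U_i$ that are good towards both $U_{i-1}$ and $U_{i+1}$; then $|W_i| \geq (1 - 2\sqrt{\epsilon_2})|U_i|$, so $|W| = \sum |W_i| \geq (1 - 2\sqrt{\epsilon_2})(1-\epsilon_1)n \geq (1-\epsilon_3)n$ provided $\epsilon_1, \epsilon_2$ are small enough in terms of $\epsilon_3$, and $|W_i| \geq (1 - 2\sqrt{\epsilon_2})\cdot n/(5k) \geq n/(6k)$ for $\epsilon_2$ small. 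The $W_i$ are still independent (subsets of independent sets), and $\rho(W_i, W_j) = 0$ for non-consecutive $i, j$ since $W_i \times W_j \subseteq U_i \times U_j$. The one thing to check is that a vertex $v \in W_i$ has $\rho(v, W_{i+1}) \geq 1 - \epsilon_4$: by construction $v$ has at most $\sqrt{\epsilon_2}|U_{i+1}|$ non-neighbours in $U_{i+1}$, hence at most $\sqrt{\epsilon_2}|U_{i+1}|$ non-neighbours in $W_{i+1}$, and $|U_{i+1}| \leq \frac{1}{1-2\sqrt{\epsilon_2}}|W_{i+1}|$, so $\rho(v, W_{i+1}) \geq 1 - \frac{\sqrt{\epsilon_2}}{1-2\sqrt{\epsilon_2}} \geq 1 - \epsilon_4$ once $\epsilon_2$ is chosen small in terms of $\epsilon_4$; the bound towards $W_{i-1}$ is symmetric.

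Finally, I would chase the quantifiers: given $\epsilon_3, \epsilon_4$, first pick $\epsilon_2 > 0$ small enough that $\frac{\sqrt{\epsilon_2}}{1-2\sqrt{\epsilon_2}} \leq \epsilon_4$ and $2\sqrt{\epsilon_2} \leq \min\{\epsilon_3/2, 1/6\}$ or so; then pick $\epsilon_1 > 0$ small enough that $2\sqrt{\epsilon_2} + \epsilon_1 \leq \epsilon_3$; then invoke Lemma \ref{lemma:dense} with this $\epsilon_1, \epsilon_2$ to get a $\delta > 0$, shrinking $\delta$ further if needed so that the first-step counting argument (ruling out $|U_i|$ small) goes through. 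I do not expect any genuine obstacle here — the content is entirely in Lemma \ref{lemma:dense}, and this lemma is a routine averaging/cleaning step; the only mild care needed is keeping the $n/(6k)$ lower bound on $|W_i|$ intact after discarding bad vertices, which is why it is worth proving the stronger bound $|U_i| \geq n/(5k)$ first.
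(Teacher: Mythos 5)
Your proposal is correct and follows essentially the same route as the paper: apply Lemma \ref{lemma:dense} with suitably small $\epsilon_1,\epsilon_2$, pass to the subsets of each $U_i$ whose vertices have few non-neighbours in $U_{i\pm 1}$ via a Markov/averaging step (this is exactly the paper's Claim \ref{claim:bip} with $\epsilon_0=\sqrt{\epsilon_2}$), and rule out a too-small class by counting $k$-cycles. The only nit is that AM--GM on the remaining $k-1$ factors gives $\prod_{j\neq i}|U_j|\leq (n/(k-1))^{k-1}$ rather than $(n/k)^{k-1}$, but this only costs a factor $(1+\tfrac{1}{k-1})^{k-1}<e$, so your contradiction still goes through (the paper absorbs the same factor when it bounds $\tfrac{1}{6}(1+\tfrac{1}{k-1})^{k-1}<\tfrac{1}{2}$).
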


\begin{proof} We will use the following claim.
\begin{claim}\label{claim:bip}
If $H=X\cup Y$ is a bipartite graph and $\rho(X,Y)>1-\epsilon_0^2$, then there exists $X'\subset X$ such that $|X'|>(1-\epsilon_0)|X|$ and for all $v\in X'$, $\rho(v,Y)>1-\epsilon_0$.
\end{claim}
\begin{proof} Let $X'=\{v\in X:\rho(v,Y)>1-\epsilon_0\}$. Then
\begin{align*}
    \epsilon_0(|X|-|X'|)&=\sum_{x\in X\setminus X'}\epsilon_0\leq \sum_{x\in X\setminus X'}(1-\rho(x,Y))\\
    &\leq\sum_{x\in X}(1-\rho(x,Y))=|X|(1-\rho(X,Y))\\
    &<|X|\epsilon_0^2.
\end{align*}
Therefore $|X'|>(1-\epsilon_0)|X|$. \end{proof}

Decrease $\epsilon_3$ if necessary such that $\epsilon_3<1/(2k^k)$. Choose $\epsilon_0,\epsilon_1>0$ such that $(1-2\epsilon_0
)(1-\epsilon_1)>1-\epsilon_3$ and $1-3\epsilon_0>1-\epsilon_4$.

Let $\delta>0$, $U_0,U_1\dots,U_{k-1}$ be given by Lemma \ref{lemma:dense} for $\epsilon_1$ and $\epsilon_2=\epsilon_0^2$, and decrease $\delta$ if necessary such that $\epsilon_3<\left(1/2-\delta\right)/k^k$ (this is possible since $\epsilon_3<1/(2k^k)$).

For all $0\leq i\leq k-1$ use Claim \ref{claim:bip} with $X=U_i$, $Y=U_{i\pm 1}$ to obtain $V_i^{\pm}\subset U_i$ such that $|V_i^{\pm}|> (1-\epsilon_0)|U_{i}|$ and for all $v\in V_i^{\pm}$, $\rho(v,U_{i\pm 1})> 1-\epsilon_0$.

Let $W_i=V_i^+\cap V_i^-$ and $W=\cup_{i=0}^{k-1}W_i$. Then $|W_i|\geq (1-2\epsilon_0)|U_i|$, so $\left|W\right|\geq (1-2\epsilon_0)(1-\epsilon_1)n>(1-\epsilon_3)n$. Also for all $v\in W_i$ we have 
\begin{align*}
    \rho(v,W_{i\pm1})= \frac{e(v,W_{i\pm 1})}{|W_{i\pm 1}|}\geq \frac{e(v,U_{i\pm1})-|U_{i\pm 1}\setminus W_{i\pm 1}|}{|U_{i\pm 1}|}>(1-\epsilon_0)-2\epsilon_0=1-3\epsilon_0>1-\epsilon_4.
\end{align*}

As $W_i\subset U_i$ for all $0\leq i\leq k-1$, we still have $\rho(W_i,W_j)=0$ for $i-j\not\equiv \pm 1 \pmod{k}$.
If $|W_i|<n/(6k)$ for some $i$, then by AM-GM the number of $k$-cycles contained in $W$ is at most
\begin{align*}
    \prod_{i=0}^{k-1}|W_i|\leq\frac{n}{6k}\frac{n^{k-1}}{(k-1)^{k-1}}=\frac{1}{6}\left(1+\frac{1}{k-1}\right)^{k-1}\frac{n^k}{k^k}<\frac{1}{2}\frac{n^k}{k^k}.
\end{align*} The number of $k$-cycles which are not contained in $W$ is at most $\epsilon_3n^k<\left(1/2-\delta\right)(n/k)^k$, which contradicts that the number of $k$-cycles in $G'$ is at least $(1-\delta)(n/k)^k$. 

Therefore the sets $W_0,W_1,\dots,W_{k-1}$ satisfy the conditions stated in the lemma. \end{proof}

In the next lemma, we combine the results of Lemma \ref{lemma:rmvodd} and Lemma \ref{lemma:mindeg} to prove that a $C_{k-2}$-free graph which contains many copies of $C_k$ must have a structure close to a blow-up of a $k$-cycle.

\begin{lemma} \label{lemma:G} For every odd $k\geq 7$ and $\epsilon_5>0$, there exist $\delta >0$ and $n_0>0$ such that if $G$ is a $C_{k-2}$-free graph on $n\geq n_0$ vertices and $G$ has at least $(1-\delta)(n/k)^k$ $k$-cycles, then $V(G)$ contains a set $B$ with $|B|\geq (1-\epsilon_5)n$, and a partition into independent sets $B=B_0\cup B_1\cup \dots \cup B_{k-1}$ with $|B_i|\geq n/(6k)$, such that the induced subgraph on $B$ is a subgraph of the blow-up of $C_k$ obtained by replacing the vertices of $C_k$ with independent sets $B_0,B_1,\dots ,B_{k-1}$. Furthermore, if $v\in V(G)\setminus B$, then there exists $0\leq j\leq k-1$ such that 
\begin{itemize}
     \item for all $0\leq i\leq k-1$, if $i\not\equiv j\pm 1\pmod{k}$, then $\rho(v,B_i)=0$,
     \item $\rho(v,B_{j-1})\leq 3/4$ or $\rho(v,B_{j+1})\leq 3/4$.
\end{itemize}\end{lemma}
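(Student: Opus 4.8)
The plan is to transfer the structure of $C_\ell$-free graphs with many $k$-cycles to the $C_{k-2}$-free graph $G$. First apply Lemma~\ref{lemma:rmvodd} to pass to a subgraph $G'$ on the same vertex set which is $C_\ell$-free for all odd $3\le\ell<k$ and still has at least $(1-\delta')(n/k)^k$ copies of $C_k$, for $\delta'$ only slightly larger than $\delta$; then apply Lemma~\ref{lemma:mindeg} with $\epsilon_3$ a small multiple of $\epsilon_5$ and $\epsilon_4$ a suitably small function of $k$, obtaining $W=W_0\cup\dots\cup W_{k-1}$ with $|W|\ge(1-\epsilon_3)n$, $|W_i|\ge n/(6k)$, each $W_i$ independent in $G'$, every $v\in W_i$ having more than $(1-\epsilon_4)|W_{i\pm1}|$ neighbours in $W_{i\pm1}$, and no $G'$-edges between $W_i$ and $W_j$ unless $i\equiv j\pm1\pmod k$. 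Since $G'\subseteq G$, the minimum-degree condition survives in $G$; the extra edges of $G$ (and the vertices outside $W$) are what we must control, and the only leverage is that $G$ contains no $C_{k-2}$.

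The basic tool is a routine \emph{path-threading lemma}: under the above hypotheses, for $n$ large and $\epsilon_4$ small, if $x\in W_a$, $y\in W_b$ with $x\ne y$, if $Z$ is a set of at most $k$ further vertices, and if $2\le L\le k$ is an integer for which $C_k$ admits a walk of length $L$ from $a$ to $b$ (equivalently, $L\ge\textrm{dist}_{C_k}(a,b)$ with $L\equiv\textrm{dist}_{C_k}(a,b)\bmod 2$, or $L\ge k-\textrm{dist}_{C_k}(a,b)$ with $L\not\equiv\textrm{dist}_{C_k}(a,b)$), then there is an $x$–$y$ path in $G$ of length exactly $L$ with all vertices in $W$ and internal vertices avoiding $Z$. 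One simply fixes a walk of length $L$ in $C_k$ and threads it greedily: at each step the minimum-degree condition leaves at least $(1-\epsilon_4)n/(6k)$ choices, far more than the at most $2k$ already-used or forbidden vertices, and at the last step one also intersects with $N(y)$, losing only an $\epsilon_4$-fraction.

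Next I establish, by one repeated mechanism — produce a $C_{k-2}$ in $G$ and contradict $C_{k-2}$-freeness — that (i) $G[W]$ is a subgraph of the blow-up of $C_k$ along $W_0,\dots,W_{k-1}$, and (ii) every $v\in V(G)\setminus W$ has $N(v)\cap W\subseteq W_p\cup W_{p+2}$ for some $p$. For (i): if $xy\in E(G)$ with $x\in W_i$, $y\in W_j$ at cyclic distance $d\ne1$, then since $k\ge7$ one checks $C_k$ has a walk of length $k-3$ from $i$ to $j$ (short direction if $d$ is even, long if $d\ge3$ is odd), so threading such a path and adding $xy$ gives a $C_{k-2}$; in particular each $W_i$ is independent in $G$. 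For (ii): if $v$ had neighbours $a\in W_p$, $b\in W_q$ with cyclic distance $d\notin\{0,2\}$, then (using $k\ge7$) $C_k$ has a walk of length $k-4$ from $p$ to $q$; threading it between $a$ and $b$ and adding $va,vb$ yields a $C_{k-2}$. Since three indices of $\mathbb Z_k$ cannot be pairwise at distance exactly $2$ when $k\ge7$, (ii) follows. Now fix a threshold $\theta$ slightly below $3/4$ (with $\theta>1/2$ and $\theta+6k\epsilon_3\le 3/4$), set $B_i=W_i\cup\{v\in V(G)\setminus W:\rho(v,W_{i-1})>\theta\text{ and }\rho(v,W_{i+1})>\theta\}$ and $B=\bigcup_iB_i$; by (ii) each added vertex lies in at most one $B_i$, and $|B|\ge|W|\ge(1-\epsilon_5)n$, $|B_i|\ge n/(6k)$. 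Edges from an added vertex of $B_i$ into $W$ land in $W_{i-1}\cup W_{i+1}$ by (ii); an edge between two added vertices of $B_i,B_{i'}$ with $i\not\equiv i',i'\pm1$ (or $i\equiv i'$) is again excluded by threading a path of length $k-5$ between suitable (common) neighbours of the two dense vertices and closing it through the edge into a $C_{k-2}$, the parity/length checks going through for all $k\ge7$ as in (i)–(ii). Hence each $B_i$ is independent and $G[B]$ is a subgraph of the blow-up of $C_k$ along $B_0,\dots,B_{k-1}$.

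For the final assertion take $v\in V(G)\setminus B$; by (ii) choose $p$ with $N(v)\cap W\subseteq W_p\cup W_{p+2}$ and set $j=p+1$. One more instance of the threading argument shows $v$ is not adjacent to any added vertex of $B_i$ for $i\not\equiv j\pm1$ (if $v$ has a neighbour in $W$, thread a path of length $k-5$; if not, compare two such added neighbours of $v$ via a path of length $k-6$), so together with (ii) we get $\rho(v,B_i)=0$ for $i\not\equiv j\pm1$. Finally, since $v\notin B$, at least one of $\rho(v,W_{j-1}),\rho(v,W_{j+1})$ is $\le\theta$; because $|B_{j\pm1}\setminus W_{j\pm1}|\le|V(G)\setminus W|\le\epsilon_3 n\le 6k\epsilon_3|B_{j\pm1}|$, we have $\rho(v,B_{j\pm1})\le\rho(v,W_{j\pm1})+6k\epsilon_3$, hence $\rho(v,B_{j-1})\le3/4$ or $\rho(v,B_{j+1})\le3/4$. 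I expect the main obstacle to be the bookkeeping here: keeping the several "threading" case analyses uniform (the exact length to thread, and the verification that $C_k$ realizes it, differ with each forbidden configuration and need $k\ge7$ throughout), and choosing $\epsilon_3,\epsilon_4,\theta$ and the input parameters of Lemma~\ref{lemma:mindeg} consistently.
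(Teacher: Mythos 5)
Your proposal is correct and follows essentially the same route as the paper: pass to $G'$ via Lemma \ref{lemma:rmvodd}, apply Lemma \ref{lemma:mindeg}, enlarge each $W_i$ to a class $B_i$ of vertices that are dense to $W_{i\pm 1}$, and rule out every forbidden adjacency by greedily threading a path through the dense bipartite pairs to manufacture a $C_{k-2}$. The only real difference is organizational: the paper folds all of your separate threading cases (edges inside $W$, added--added edges, outside vertices) into the single Claim that any $v\in V(G)$ with neighbours in $B_i$ and $B_{i+j}$, $0<j<k/2$, forces $j=2$, proved by one two-case (parity of $j$) construction, which streamlines exactly the length/parity bookkeeping you flag as the main obstacle.
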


\begin{proof} Take any $0<\epsilon_4<1/3$. Choose $\epsilon_3>0$ such that $\epsilon_3<\epsilon_5$, $C:=(1-6k\epsilon_3)\cdot 2/3>1/2$ and $2/3+6k\epsilon_3\leq 3/4$. Apply Lemma \ref{lemma:mindeg} with $\epsilon_3,\epsilon_4$ and let $\delta_0$ be the value of $\delta$ provided by that lemma. Take $\delta=\delta_0/2$. Let $n_0$ be large enough such that Lemma \ref{lemma:rmvodd} applies for $\delta$, and such that $n_0>6k^2/(2C-1)$. Let $G$ be a $C_{k-2}$-free graph on $n\geq n_0$ vertices with at least $(1-\delta)(n/k)^k$ $k$-cycles. Apply Lemma \ref{lemma:rmvodd} to $G$ and $\delta$ to get a $\{C_3,C_5,\dots,C_{k-2}\}$-free graph $G'$ that has at least $(1-\delta)(n/k)^k-\delta(n/k)^k=(1-\delta_0)(n/k)^k$ $k$-cycles. Apply Lemma \ref{lemma:mindeg} to $G'$ to obtain sets $W=W_0\dot\cup W_1\dot\cup \dots \dot\cup W_{k-1}$ such that $|W|\geq (1-\epsilon_3)n$, and for all $0\leq i\leq k-1$ we have $|W_i|\geq n/(6k)$ and if $v\in W_i$ and $j\equiv i\pm 1\pmod{k}$, then $\rho_G(v,W_j)\geq \rho_{G'}(v,W_j)\geq 1-\epsilon_4$.

For all $0\leq i\leq k-1$ let $W_i'=\{v\in V(G):|N_G(v)\cap W_{i+1}|>\frac{2}{3}|W_{i+1}| \text{ and } |N_G(v)\cap W_{i-1}|>\frac{2}{3}|W_{i-1}|\}$. Observe that $W_i\subset W_i'$. Choose disjoint sets $B_0,B_1,\dots,B_{k-1}$ with $W_i\subset B_i\subset W_i'$ and $B=\cup_{i=0}^{k-1}B_i=\cup_{i=0}^{k-1}W_i'$. Now clearly $|B_i|\geq |W_i|\geq n/(6k)$. Using that $|B_i|\leq |W_i|+\epsilon_3n$, we also get $|W_i|\geq (1-6k\epsilon_3)|B_i|$. So if $v\in B_i$, then
\begin{align*}
    e_G(v,B_{i\pm 1})&\geq e_G(v,W_{i\pm 1})\geq \frac{2}{3}|W_{i\pm 1}|\geq \frac{2}{3}\left(1-6k\epsilon_3\right)|B_{i\pm 1}|=C|B_{i\pm 1}|.
\end{align*}

\begin{claim}\label{claim:nbr} For all $v\in V(G)$, if $v$ has neighbours in $B_i$ and $B_{i+j}$ with $0< j<\frac{k}{2}$, then $j=2$. \end{claim}

\begin{proof} Suppose that $j\ne 2$. By symmetry we may assume that $i=0$. Let $v_0$ be a neighbour of $v$ in $B_0$ and let $v_{k-4}$ be a neighbour of $v$ in $B_j$. We will construct vertices $v_1, v_2,\dots ,v_{k-5}$ in $B$ one by one such that $vv_0v_1\dots v_{k-5}v_{k-4}$ forms a $(k-2)$-cycle, hence get a contradiction. Consider two cases based on the parity of $j$.

If $j$ is odd, we look for $v_i$ with $v_{i-1}v_i\in E(G)$ and
\begin{itemize}
    \item $v_i\in B_i$ if $1\leq i\leq j$,
    \item $v_i\in B_j$ if $j<i<k-5$ and $i$ is odd,
    \item $v_i\in B_{j-1}$ if $j<i<k-5$ and $i$ is even.
\end{itemize}

This is possible by induction, since if $v_{i-1}\in B_{a}$ then
\begin{align*}
    e_G(v_{i-1},B_{a\pm 1}\setminus \{v,v_0,\dots,v_{i-1},v_{k-4}\})\geq e_G(v_{i-1},B_{a\pm 1})-k\geq C|B_{a\pm 1}|-k\geq \frac{Cn}{6k}-k>0.
\end{align*}
Since $j<k/2$, we have $k-6\geq j-2$, so $v_{k-6}\in B_j\cup B_{j-2}$. Combining this with $v_{k-4}\in B_j$, we obtain
 \begin{align*}
     \left|\left(N_G(v_{k-4})\cap N_G(v_{k-6})\cap B_{j-1}\right)\setminus \{v,v_0,\dots,v_{k-2}\} \right|\geq (2C-1)|B_{j-1}|-k\geq \frac{(2C-1)n}{6k}-k>0,
 \end{align*}
 hence $v_{k-5}\in B_{j-1}$ can be chosen such that $vv_0v_1\dots v_{k-5}v_{k-4}$ forms a $(k-2)$-cycle.

If $j$ is even (and hence $j\geq 4$ by the assumption that $j\neq 2$), we look for $v_i$ with $v_{i-1}v_i\in E(G)$ and
\begin{itemize}
    \item $v_i\in B_{k-i}$ if $1\leq i\leq k-j-2$,
    \item $v_i\in B_{j+2}$ if $k-j-2<i<k-5$ and $i$ is odd,
    \item $v_i\in B_{j+1}$ if $k-j-2<i<k-5$ and $i$ is even.
\end{itemize}

This is possible by induction and the same inequality as above. Now, using $j\geq 4$, we have $v_{k-6}\in B_{j+2}$. Combining this with $v_{k-4}\in B_j$, we obtain
 \begin{align*}
     \left|\left(N_G(v_{k-4})\cap N_G(v_{k-6})\cap B_{j+1}\right)\setminus \{v,v_0,\dots,v_{k-2}\} \right|\geq (2C-1)|B_{j+1}|-k\geq \frac{(2C-1)n}{6k}-k>0,
 \end{align*}
 so $v_{k-5}\in B_{j+1}$ can be chosen such that $vv_0v_1\dots v_{k-5}v_{k-4}$ forms a $(k-2)$-cycle.\end{proof}

Since each $v\in B_i$ has a neighbour in both $B_{i-1}$ and $B_{i+1}$, from Claim \ref{claim:nbr} we conclude that the induced subgraph of $G$ on $B$ is a subgraph of the blow-up of a $k$-cycle whose vertices were replaced by the independent sets $B_0,B_1,\dots ,B_{k-1}$. Also, if $v\in V(G)\setminus B$, then there exists $0\leq j\leq k-1$ such that for all $0\leq i\leq k-1$, if $i\not\equiv j\pm 1\pmod{k}$, then $\rho_G(v,B_i)=0$. By the definition of $B$, $\rho_G(v,W_{i})\leq 2/3$ for $i=j+1$ or $i=j-1$. As $|W_i|\geq (1-6k\epsilon_3)|B_i|$ and $2/3+6k\epsilon_3\leq 3/4$, we get
 \begin{align*}
     e_G(v,B_i)&\leq e_G(v,W_i)+\epsilon_3n\leq \frac{2}{3}|W_i|+6k\epsilon_3|B_i|\leq \frac{2}{3}|B_i|+6k\epsilon_3|B_i|\leq \frac{3}{4}|B_i|.
 \end{align*}
 Therefore the sets $B_0,B_1,\dots,B_{k-1}$ satisfy the conditions stated in the lemma.
 \end{proof}

We will use Lemma \ref{lemma:G} to prove that for any $C_{k-2}$-free graph $G$ there is a blow-up of $C_k$ which contains at least as many $k$-cycles as $G$, with equality only if $G$ itself is also a blow-up of $C_k$. This easily implies Theorem \ref{thm:k,k-2} which we restate for convenience.

\mainthm*

\begin{proof} Let $\epsilon_5>0$ be such that $k\epsilon_5<(1/4)/(6k)^{k-1}$. Use Lemma \ref{lemma:G} to obtain $n_0$ and $\delta$, and increase $n_0$ if necessary such that the balanced blow-up of a $k$-cycle on $n\geq n_0$ vertices contains more than $(1-\delta)(n/k)^k$ $k$-cycles.

Let $n\geq n_0$ and let $G$ be a $C_{k-2}$-free graph on $n$ vertices maximising the number of $k$-cycles, so in particular $G$ has more than $(1-\delta)(n/k)^k$ $k$-cycles. Use Lemma \ref{lemma:G} for $\epsilon_5$ and $G$ to obtain $B,B_0,\dots,B_{k-1}$. By symmetry, we can suppose that $B_0$ has minimal size among $B_0,\dots,B_{k-1}$. Let $b=|B|$ and $P=\prod_{i=1}^{k-1}|B_i|$.

The number of $k$-cycles contained in $B$ is at most $P\cdot |B_0|$. The number of $k$-cycles which contain at most $k-2$ vertices from $B$ is at most $k(n-b)^2n^{k-2}$ since any such cycle contains at least two vertices from $V(G)\setminus B$, there are at most $(n-b)^2$ ways to choose two such vertices and there are at most $k$ ways to choose their relative position on the cycle. Let $v\in V(G)\setminus B$. If $v$ has neighbours from at most one of the sets $B_i$, then there is no $k$-cycle containing $v$ and $k-1$ vertices from $B$. If $v$ has neighbours in exactly two of them, they must be in $B_{i+1}$ and $B_{i-1}$ for some $i$ and by symmetry we can assume that $\rho(v,B_{i+1})\leq 3/4$. Then the number of $k$-cycles that contain $v$ and $k-1$ vertices from $B$ is at most $\frac{3}{4}\prod_{j\ne i}|B_{j}|\leq \frac{3}{4}P$. There are $n-b$ possibilities for choosing $v$, so the total number of $k$-cycles in $G$ is at most $P\cdot |B_0|+\frac{3}{4}P(n-b)+k(n-b)^2n^{k-2}$.

Let $H$ be the graph on $n$ vertices obtained by blowing up a $k$-cycle with sets of sizes $|B_0|+n-b,|B_1|,|B_2|,\dots,|B_{k-1}|$. Then $H$ has $(|B_0|+n-b)P$ copies of $C_k$, while $G$ has at most
\begin{align*}
    &P\cdot |B_0|+\frac{3}{4}P(n-b)+k(n-b)^2n^{k-2}\\
    &=(|B_0|+n-b)P+(n-b)\left(k(n-b)n^{k-2}-\frac{1}{4} P\right)\\
    &\leq (|B_0|+n-b)P+(n-b)\left(k\epsilon_5 n^{k-1}-\frac{1}{4}(n/6k)^{k-1}\right).
\end{align*}
By assumption $k\epsilon_5<(1/4)/(6k)^{k-1}$, so if $n-b>0$, then $H$ has more $k$-cycles than $G$. So $n=b$ and $V(G)=B$, therefore $G$ is a subgraph of the blow-up of $C_k$ with sets $B_0,\dots,B_{k-1}$. So $G$ has at most $|B_0||B_1|\dots |B_{k-1}|$ cycles of length $k$ with equality if and only if $G$ is isomorphic to the blow-up of $C_k$ with sets $B_0,\dots,B_{k-1}$. Clearly, $|B_0||B_1|\dots |B_{k-1}|$ is maximal only if $B_0,B_1,\dots,B_{k-1}$ are as equal as possible, completing the proof.\end{proof}

\subsection{Proof of Theorem \ref{thm:k,l}}

In this section, we will prove that if $\ell\geq 3$ is a fixed odd number, then there exists $k_0$, such that for all odd $k\geq k_0$, there is an unbalanced blow-up of an $(\ell+2)$-cycle on $n$ vertices which contains asymptotically more $k$-cycles than the balanced blow-up.

Fix positive real numbers $w_0,w_1,\dots,w_{\ell+1}$ with $w_0+w_1+\dots+w_{\ell+1}=1$ and let $G_n$ be an unbalanced blow-up of an $(\ell+2)$-cycle on $n$ vertices with independent sets $A_0,A_1,\dots,A_{k-1}$ of sizes differing from $w_0n,w_1n,\dots,w_{\ell+1}n$ by at most 1. We want to find a lower bound on the number of $k$-cycles in $G_n$, by only counting the cycles which have one vertex from $A_i$ for each $0\leq i\leq \ell-1$, $(k-\ell)/2$ vertices from $A_{\ell}$ and $(k-\ell)/2$ vertices from $A_{\ell+1}$. There are at least $w_0w_1\dots w_{\ell-1} (w_\ell w_{\ell+1})^{(k-\ell)/2}n^k-o(n^k)$ cycles with this property.

Let $H_n$ be the balanced blow-up of an $(\ell+2)$-cycle on $n$ vertices. The number of $k$-cycles in $H_n$ is bounded from above by the number of walks of length $k-1$ starting from a given independent set in $H_n$. Hence it is bounded by $(n/(\ell+2)+1)\cdot (2n/(\ell+2)+2)^{k-1}=\frac{1}{2}(2/(\ell+2))^kn^k+o(n^k)$.

Therefore it suffices to find weights which satisfy 
\begin{align}\label{eqn:weights}
    w_0w_1\dots w_{\ell-1} (w_\ell w_{\ell+1})^{(k-\ell)/2}>(2/(\ell+2))^k.
\end{align}

If $\ell$ is fixed, then for any weights that satisfy $w_{\ell}w_{\ell+1}>(2/(\ell+2))^2$, there exists $k_0$ such that for all $k\geq k_0$ equation (\ref{eqn:weights}) is satisfied. So choosing for example $w_0=w_1=\dots=w_{\ell-1}=(\ell-2)/(2\ell(\ell+2))$ and $w_{\ell}=w_{\ell+1}=1/4+1/(\ell+2)$, we have for all $k\geq k_0$ that $G_n$ is $C_{\ell}$-free and contains asymptotically more copies of $C_k$ than the balanced blow-up of $C_{\ell+2}$.

\section{Concluding remarks} \label{sec:remarks}

In this section, we show that for any $k\geq 7$, Theorem \ref{thm:k,k-2} is no longer true if $n$ is not assumed to be large enough, in fact it fails for all $k\leq n\leq 3(k-1)/2$. We also show that in Theorem~\ref{thm:k,l} we may take $k=\ell +C\ell/\log \ell$ for some absolute constant $C$.

\begin{proposition}
    If $k\geq 7$ is odd and $k\leq n\leq 3(k-1)/2$, then there is a $C_{k-2}$-free graph on $n$ vertices which contains more $k$-cycles than the balanced blow-up of a $k$-cycle on $n$ vertices.
\end{proposition}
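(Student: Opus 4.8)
## Proof proposal for the Proposition

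The plan is to exhibit an explicit $C_{k-2}$-free graph on $n$ vertices, for $k\le n\le 3(k-1)/2$, that beats the balanced blow-up of $C_k$. The key observation is that in this range of $n$, the balanced blow-up of $C_k$ is extremely inefficient: with $n<3(k-1)/2<2k$, the parts have size at most $2$, so most parts are singletons and the number of $k$-cycles is tiny (it is $\prod_i |B_i| \le 2^{n-k}$, which is only exponential in $n-k$ rather than anything like $(n/k)^k$). Thus I only need a competing $C_{k-2}$-free graph with a modest number of $k$-cycles. The natural candidate is a single $k$-cycle $v_0v_1\dots v_{k-1}$ together with a few extra vertices attached so as to create several $k$-cycles while not creating any $C_{k-2}$.

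Concretely, I would take the $k$-cycle $C=v_0v_1\dots v_{k-1}$ and add $m:=n-k$ new vertices $u_1,\dots,u_m$, where each $u_j$ is joined to a pair of vertices $v_{a_j},v_{a_j+2}$ at distance exactly $2$ along $C$ (a "chord-of-length-2" attachment). Each such $u_j$ together with the long arc of $C$ between $v_{a_j}$ and $v_{a_j+2}$ gives a new $k$-cycle (it replaces the two edges $v_{a_j}v_{a_j+1}$, $v_{a_j+1}v_{a_j+2}$ by the path $v_{a_j}u_jv_{a_j+2}$), so already we get $m+1\ge 2$ copies of $C_k$, which for $m\ge 1$ (i.e. $n\ge k+1$; the case $n=k$ is handled separately since then the balanced blow-up \emph{is} a $k$-cycle, and we should instead use that $n=k\le 3(k-1)/2$ forces $k\ge 2$, and note the statement presumably intends $n>k$, or use an attachment of two vertices when $n=k$ is excluded — I would double-check the boundary). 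The attachment points $a_j$ should be chosen spread out along $C$ so that distinct $u_j$'s do not interact to form short cycles; with $m\le (k-1)/2$ extra vertices there is enough room on the cycle to keep all the $a_j$ pairwise far apart.

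The main technical step, and the part I expect to be the real obstacle, is verifying that this graph is $C_{k-2}$-free. I would argue as follows. Any cycle in the graph that uses some of the $u_j$'s must, at each $u_j$ it visits, enter and leave through $v_{a_j}$ and $v_{a_j+2}$; so contracting each such two-edge detour $v_{a_j}u_jv_{a_j+2}$ back to the path $v_{a_j}v_{a_j+1}v_{a_j+2}$ turns any cycle in our graph into a closed walk in $C$ of the \emph{same length} (each $u_j$-detour has length $2$, same as the arc it bypasses). A closed walk in a single cycle $C_k$ of length $<k$ must be trivial, and the only closed walks of length exactly $k$ are $C$ itself traversed once; any genuine cycle must have length at least... here one must be careful, because a detour through $u_j$ combined with the short arc $v_{a_j}v_{a_j+1}v_{a_j+2}$ forms a $4$-cycle $v_{a_j}v_{a_j+1}v_{a_j+2}u_j$, and combining several of these could in principle make a $C_{k-2}$. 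So the real content is a case analysis: a cycle in our graph is determined by a set $S$ of detours taken (each through some $u_j$, each either "short way" giving $+4-2=+2$ edges locally or... ) — more precisely, I would show that every cycle has length either $4$ (a single triangle-like $4$-cycle through one $u_j$), or a sum over a set of used $u_j$'s of local contributions, and that the attainable lengths are $4$, $k$, $k+2$, $\dots$ but never $k-2$ (nor any odd length, nor $6,8,\dots,k-4$), provided the $a_j$ are chosen sufficiently spread out so that no two $u_j$-detours overlap on $C$ and no short arc between two attachment points has length less than, say, $3$. Since $n-k\le (k-1)/2-1<k/2$, the $m$ attachment intervals (each of length $2$) plus the required spacing fit around $C$, so such a choice of the $a_j$ exists; a clean way is to place $a_j = 4(j-1)$ or similar, checking $4m < k$. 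Once $C_{k-2}$-freeness is established, we are done: our graph has at least $m+1\ge 2$ copies of $C_k$, while for $n\le 3(k-1)/2<2k$ the balanced blow-up of $C_k$ has at most $2^{n-k}\le 2^{(k-1)/2-k}<1$ — wait, that is below $1$, so in fact the balanced blow-up on $n<2k$ vertices with $n>k$ has some parts of size $2$ and the count $\prod|B_i|$ equals $2^{n-k}$; I would simply observe $2^{n-k}$ can be smaller than $m+1=n-k+1$ for suitable $n$ in the range, or — more robustly — strengthen the construction (attach each $u_j$ to create more than one new cycle, e.g. by also recording that the relevant arcs can be traversed, or by adding a second layer of vertices) so that our count comfortably exceeds $2^{n-k}$ for all $n\le 3(k-1)/2$. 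I would finalize the comparison by a short direct computation: $2^{n-k}$ versus the explicit number of $k$-cycles in the construction (which can be made, e.g., $3^{\lfloor (n-k)/2\rfloor}$ or larger by using pairs of parallel chords), confirming strict inequality throughout $k\le n\le 3(k-1)/2$.
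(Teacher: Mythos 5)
There is a fatal flaw in your construction before you ever reach the $C_{k-2}$-freeness analysis: each $u_j$, being joined to exactly $v_{a_j}$ and $v_{a_j+2}$, is a twin of $v_{a_j+1}$, so your graph is literally a blow-up of $C_k$ (with the part at position $a_j+1$ enlarged to size $2$). Since $k$ is odd, every $k$-cycle in a blow-up of $C_k$ uses exactly one vertex from each part, so the number of $k$-cycles is $\prod_i |B_i|$, which by AM--GM is \emph{maximised} by the balanced blow-up. Hence your graph has at most $2^{n-k}$ copies of $C_k$ and can never beat the target. Your fallback repairs do not rescue this: $2^{n-k}\geq n-k+1$ for every $n\geq k$, so counting only single detours cannot win; and the ``pairs of parallel chords'' variant spends two extra vertices to get three choices at one position, yielding $3^{\lfloor (n-k)/2\rfloor}=(\sqrt{3}\,)^{n-k}<2^{n-k}$, which is strictly worse. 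The $C_{k-2}$-freeness verification that you flagged as the main obstacle is in fact the easy part here (a blow-up of an odd cycle $C_k$ has odd girth $k$); the real difficulty is the count, and no blow-up of $C_k$ can supply it.

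The missing idea is to shorten the base cycle. The paper starts from a $(k-1)$-cycle $v_1v_2\dots v_{k-1}$ and joins each of the $n-k+1$ extra vertices $w_i$ to \emph{three} consecutive vertices $v_{2i-1},v_{2i},v_{2i+1}$. Each $w_i$ can then play two roles: it can replace $v_{2i}$ (a length-preserving detour, as in your construction), or it can be \emph{inserted} into one of the two edges $v_{2i-1}v_{2i}$ or $v_{2i}v_{2i+1}$, lengthening the cycle by one. A $k$-cycle is obtained by inserting exactly one $w_i$ (an index choice times $2$ ways) and independently swapping or not swapping each other $w_j$ for $v_{2j}$, giving $(n-k+1)2^{n-k+1}>2^{n-k}$ copies of $C_k$, while the graph remains $C_{k-2}$-free. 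Some gadget of this asymmetric kind --- one that genuinely changes the cycle length rather than cloning a vertex --- is essential to the argument.
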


\begin{proof}
    The balanced blow-up of a $k$-cycle on $n$ vertices for $k\leq n\leq 3(k-1)/2$ contains $2^{n-k}$ $k$-cycles. Let $G_n$ be the graph which consist of a $(k-1)$-cycle $v_1v_2\dots v_{k-1}$ and vertices $w_1,w_2\dots, w_{n-k+1}$ such that $w_i$ is connected to $v_{2i-1},v_{2i},v_{2i+1}$, with the indices taken modulo $k-1$. Then $G_n$ is $C_{k-2}$-free, and contains $(n-k+1)2^{n-k+1}>2^{n-k}$ $k$-cycles.
\end{proof}

Note that for $k\leq n\leq k+10$ this example also contains more than $(n/k)^k$ $k$-cycles. (Indeed, $(n/k)^k=(1+\frac{n-k}{k})^k\leq e^{n-k}$, which is less than $(n-k+1)2^{n-k+1}$ for $k\leq n\leq k+10$.)

Given an odd integer $\ell\geq 3$, we want to find a lower bound for $k_0=k_0(\ell)>0$ such that if $k\geq k_0$ is odd, then the balanced blow-up of an $(\ell+2)$-cycle is not maximising the number of $k$-cycles in a $C_\ell$-free graph asymptotically.

\begin{proposition}
    There exists a constant $C$ such that for each odd $\ell \geq 3$ and odd $k\geq \ell+C\ell/\log \ell$, there is an unbalanced blow-up of $C_{\ell+2}$ on $n$ vertices which contains asymptotically more copies of $C_k$ than the balanced blow-up of $C_{\ell+2}$ on $n$ vertices.
\end{proposition}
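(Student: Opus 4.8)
The plan is to make the reduction from the proof of Theorem~\ref{thm:k,l} quantitative. Recall that there it was shown that, given positive weights $w_0,\dots,w_{\ell+1}$ summing to $1$, the unbalanced blow-up of $C_{\ell+2}$ with part sizes $\approx w_in$ is automatically $C_\ell$-free and contains asymptotically more copies of $C_k$ than the balanced blow-up as soon as (\ref{eqn:weights}) holds, i.e.\ as soon as
\[
w_0w_1\cdots w_{\ell-1}\,(w_\ell w_{\ell+1})^{(k-\ell)/2}>\Big(\tfrac{2}{\ell+2}\Big)^k.
\]
So it suffices to exhibit, for every odd $\ell\ge 3$ and every odd $k\ge \ell+C\ell/\log\ell$, a choice of weights verifying this inequality.

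The natural choice is to give the two ``long'' parts a fixed proportion of the vertices and spread the rest evenly: fix an absolute constant $w\in(2/5,1/2)$ (for instance $w=9/20$), and set $w_\ell=w_{\ell+1}=w$ and $w_0=\dots=w_{\ell-1}=(1-2w)/\ell$. Then the left-hand side above equals $\big((1-2w)/\ell\big)^\ell w^{k-\ell}$, and after taking logarithms the inequality rearranges to
\[
k\cdot\log\frac{w(\ell+2)}{2} > \ell\cdot\log\frac{w\ell}{1-2w}.
\]
Since $w>2/5\ge 2/(\ell+2)$ for every $\ell\ge 3$, the coefficient $\log\frac{w(\ell+2)}{2}$ of $k$ is strictly positive, so this is equivalent to $k > k_0(\ell)$, where $k_0(\ell):=\ell\log\!\big(w\ell/(1-2w)\big)\big/\log\!\big(w(\ell+2)/2\big)$; in particular the inequality, once it holds for some $k$, holds for all larger $k$ of the same parity.

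It remains to bound $k_0(\ell)$. Writing $\log\frac{w\ell}{1-2w}=\log\frac{w(\ell+2)}{2}+\log\frac{2\ell}{(\ell+2)(1-2w)}$ and using $\frac{2\ell}{\ell+2}<2$, the last term is at most the absolute constant $\log\frac{2}{1-2w}$, so $k_0(\ell)\le \ell\big(1+\log(2/(1-2w))/\log(w(\ell+2)/2)\big)$. For $\ell$ large enough that $\log(w(\ell+2)/2)\ge\frac12\log\ell$, this yields $k_0(\ell)\le \ell+C'\ell/\log\ell$ with $C'=2\log(2/(1-2w))$ absolute; for the finitely many remaining (small) $\ell$, the quantity $\log\ell$ is bounded away from $0$ and $k_0(\ell)$ is finite (indeed this already follows from Theorem~\ref{thm:k,l}), so enlarging the constant to absorb these cases gives an absolute $C$ with $k_0(\ell)\le \ell+C\ell/\log\ell$ for all odd $\ell\ge 3$. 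Combined with the reduction above, this proves the proposition. The only genuine subtlety is producing a \emph{single} absolute constant: the estimate on $k_0(\ell)$ degrades for small $\ell$, where $\log(w(\ell+2)/2)$ is close to $0$ (e.g.\ $\ell=3$), and keeping the coefficient of $k$ positive forces $w$ to be bounded below by $2/5$ — both points are handled by fixing $w$ to be a constant in $(2/5,1/2)$ and treating small $\ell$ as a finite check.
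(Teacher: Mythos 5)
Your proof is correct, and it follows the same overall strategy as the paper: reduce to exhibiting weights satisfying (\ref{eqn:weights}), take logarithms, and bound the resulting threshold in $k$. The difference is in the choice of weights and the resulting bookkeeping. The paper first optimises the left-hand side of (\ref{eqn:weights}), arriving at the $k$-dependent weights $w_0=\dots=w_{\ell-1}=1/k$, $w_\ell=w_{\ell+1}=(k-\ell)/2k$; the price is that the resulting quantity $f(k,\ell)=(k-\ell)\log((k-\ell)/2)-k\log(2k/(\ell+2))$ is nonlinear in $k$, so the paper must differentiate to show $f$ is eventually increasing in $k$ before estimating it at $k=\ell+2\ell/\log\ell$. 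You instead fix $k$-independent weights (a constant $w\in(2/5,1/2)$ on the two heavy parts, the rest spread evenly), which makes the log-inequality \emph{linear} in $k$ with positive leading coefficient $\log(w(\ell+2)/2)$; monotonicity in $k$ is then automatic and the threshold $k_0(\ell)$ has a closed form that you bound directly by $\ell+O(\ell/\log\ell)$. Your weights are suboptimal, so your implied constant $C$ is worse, but the statement only asks for some absolute $C$, and both arguments handle small $\ell$ identically by enlarging $C$ to absorb finitely many cases (using that $k_0(\ell)$ is finite for each fixed $\ell$, which is guaranteed since your coefficient of $k$ stays positive for all $\ell\ge 3$ thanks to $w>2/5$). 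The computations you give (the rearrangement to $k\log\frac{w(\ell+2)}{2}>\ell\log\frac{w\ell}{1-2w}$ and the splitting of $\log\frac{w\ell}{1-2w}$) check out.
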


\begin{proof}
Given $\ell$ and $k$, we first optimize the left-hand side of equation (\ref{eqn:weights}), to find $w_0=w_1=\dots=w_{\ell-1}=1/k$ and $w_{\ell}=w_{\ell+1}=(k-\ell)/2k$. By taking the logarithm in equation (\ref{eqn:weights}), we need

\begin{align}\label{eqn:optw}
    f(k,\ell)=(k-\ell)\log((k-\ell)/2)-k\log(2k/(\ell+2))>0.
\end{align}

By differentiating with respect to $k$ for fixed $\ell$, we get that $f(k,\ell)$ is increasing in $k$ for $k\geq \ell+4+8/(\ell-2)$. So if equation (\ref{eqn:optw}) holds for some $(k_0,\ell)$ with $k_0\geq \ell+4+8/(\ell-2)$, then it holds for any $(k,\ell)$ with $k>k_0$. We will prove that for some $\ell_0$, if $\ell\geq \ell_0$ and $k=\ell+2\ell/\log \ell$, then $f(k,\ell)>0$. Write $t=k-\ell=2\ell/\log \ell$. Using that for all $x>0$ we have $\log(1+x)<x$, we obtain

\begin{align*}
    f(k,\ell)&>t\log\left(\frac{t}{2}\right)-(t+\ell)\left(\log 2+\log\left(1+\frac{t}{\ell}\right)\right)\\
    &>t\log\left(\frac{t}{2}\right)-(t+\ell)(\log 2+t/\ell)\\
    &=t\left(\log t-\log 2-\log 2-t/\ell-(\log 2)\ell/t-1\right).\\
\end{align*}

Choose $\ell_0$ such that $2\log 2+1+2/\log \ell_0\leq 3$. Then, using that $t/\ell=2/\log \ell\leq 2/\log \ell_0$,
\begin{align*}
    f(k,\ell)/t>\log t-3-(\log 2)\ell/t=\log 2 +\log \ell-\log \log \ell -3-(\log \ell)(\log 2)/2
\end{align*}
 For large enough $\ell$ this is positive, so there exists $\ell_0$ such that if $\ell\geq \ell_0$ is odd, and $k\geq \ell+2\ell/\log \ell$ is odd, then the balanced blow-up of an $(\ell+2)$-cycle is not maximising the number of $k$-cycles in a $C_\ell$-free graph asymptotically. Therefore there exists some absolute constant $C$ such that the same holds for any odd $\ell\geq 3$ and $k\geq\ell +C\ell/\log \ell$.
\end{proof}

The following conjecture seems very natural.

\begin{conjecture}
    For any odd integers $k>\ell\geq 3$, there exists $n_0$ such that for all $n\geq n_0$, if $G$ is a $C_{\ell}$-free graph on $n$ vertices maximising the number of $k$-cycles, then $G$ must be a blow-up of an $(\ell+2)$-cycle.
\end{conjecture}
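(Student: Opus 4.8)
The plan is to run the same three-phase stability strategy that proves Theorem~\ref{thm:k,k-2}, with $C_{\ell+2}$ now playing the role that $C_k$ played there. \textbf{Phase 1 (killing short odd cycles).} Let $G$ be a $C_\ell$-free $n$-vertex graph maximising the number of $k$-cycles. Since a blow-up of $C_{\ell+2}$ already contains $\Omega(n^k)$ copies of $C_k$, so does $G$. By Lemma~\ref{prop:l,k-2} (with the two cycle lengths swapped) we have $\ex(n,C_j,C_\ell)=o(n^j)$ for every odd $3\le j<\ell$, so the argument of Lemma~\ref{lemma:rmvodd} applies verbatim: the Graph Removal Lemma lets us delete $o(n^2)$ edges to obtain a $\{C_3,C_5,\dots,C_\ell\}$-free graph $G'$ retaining all but $o(n^k)$ of the $k$-cycles of $G$. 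Every $k$-cycle of $G'$ is then induced, and, being odd of length $k>\ell$, any such cycle must ``wind around'' any blow-up of $C_{\ell+2}$ at least once.

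\textbf{Phase 2 (tight upper bound with stability).} This is the heart of the matter, and the step I expect to be the main obstacle. First one must identify the extremal blow-up: optimising the weights exactly as in the proof of Theorem~\ref{thm:k,l} singles out (for $k$ large) two consecutive parts of size $\tfrac{(k-\ell)n}{2k}$ together with $\ell$ parts of size $\tfrac nk$, giving a limiting $C_k$-density $g(k,\ell)$ equal to $\big(\tfrac1k\big)^{\ell}\big(\tfrac{k-\ell}{2k}\big)^{k-\ell}$ times the combinatorial factor counting the ways a $k$-cycle can be threaded through such a blow-up (winding once, with its back-and-forth detours confined to the two large parts); for $k$ close to $\ell$ this interpolates to the balanced blow-up. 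One would then need to prove that \emph{every} $\{C_3,\dots,C_\ell\}$-free $n$-vertex graph has at most $(1+o(1))g(k,\ell)n^k$ copies of $C_k$, with a stability companion: a graph attaining $(1-o(1))g(k,\ell)n^k$ copies can be turned into a blow-up of $C_{\ell+2}$ by editing $o(n^2)$ edges. The difficulty is that the Grzesik--Kielak weighting behind Lemma~\ref{lem:relate cycles and M} is tuned to $\ell=k-2$: there every vertex has at most two neighbours on a $k$-cycle, and the single parameter $M_{G'}$ governs the count, with $M_{G'}=1$ reproducing the blow-up density exactly. When $\ell<k-2$ a vertex can have many neighbours on a $k$-cycle (in the extremal construction the cycle visits each large part roughly $\tfrac{k-\ell}{2}$ times), so $M_{G'}$ no longer measures the right thing. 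The natural attempt is to replace the good sequences $D_j$ by sequences that also record, at each step, whether the step advances around $C_{\ell+2}$ or executes a detour, and to feed the resulting weights through the harmonic--geometric--arithmetic mean chain; the bookkeeping analogous to Lemma~\ref{claim:GK} --- bounding, for a fixed $k$-cycle, the total contribution of each vertex according to how it attaches to the cycle --- is where essentially all of the work lies, and it is not clear a clean closed form falls out. Alternatives worth trying would be an entropy-counting reformulation of the same idea, or an induction that reveals the cycle one arc at a time.

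\textbf{Phase 3 (stability to the exact statement).} Granting Phase 2, the conclusion follows the template of Lemma~\ref{lemma:G} and the proof of Theorem~\ref{thm:k,k-2}. Stability supplies a partition of all but $o(n)$ vertices of $G$ into independent sets $B_0,\dots,B_{\ell+1}$ that are cyclically almost-complete-bipartite with (essentially) no edges between non-consecutive parts; a minimum-degree cleaning in the style of Lemma~\ref{lemma:mindeg}, followed by a ``no long chord'' argument exploiting $C_\ell$-freeness as in Claim~\ref{claim:nbr}, upgrades this to: the induced subgraph on $B=\bigcup_i B_i$ is a genuine subgraph of a blow-up of $C_{\ell+2}$, every part has size $\Omega(n)$, and each stray vertex $v\notin B$ sends edges into at most two consecutive parts $B_{j\pm1}$, to at least one of which its density is bounded away from $1$. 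Counting $k$-cycles precisely as in the proof of Theorem~\ref{thm:k,k-2} --- comparing $G$ with the blow-up obtained by absorbing each stray vertex into whichever part helps most --- forces a maximiser to have no stray vertices and to realise every edge permitted by the partition, so it is a blow-up of $C_{\ell+2}$; the convexity of the product of part sizes under the cyclic constraint then pins down the sizes. The only genuinely new ingredient relative to the $\ell=k-2$ case is Phase~2; Phases~1 and~3 are routine adaptations of what is already in the paper.
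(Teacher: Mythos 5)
This statement is one the paper itself leaves as an open conjecture, so there is no proof of record to compare against; what you have written is a research plan rather than a proof, and by your own admission the decisive step is missing. Concretely, Phase 2 --- a tight asymptotic upper bound of the form $(1+o(1))g(k,\ell)n^k$ on the number of $k$-cycles in a $\{C_3,\dots,C_\ell\}$-free graph, together with a stability companion --- is exactly the content of the conjecture in disguise, and you offer no argument for it beyond noting (correctly) that the Grzesik--Kielak weighting of Lemmas~\ref{lem:weightsandkcycles}--\ref{claim:GK} breaks down once a vertex can have many neighbours on a $k$-cycle. Gesturing at ``sequences that also record whether the step advances or executes a detour'' is not a proof; until that bookkeeping is carried out, nothing in Phases 1 or 3 can be deployed, since Phase 3 consumes the stability output of Phase 2 as its input.

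There is also a substantive error in how you set up Phase 2. The weights $w_0=\dots=w_{\ell-1}=1/k$, $w_\ell=w_{\ell+1}=(k-\ell)/(2k)$ do not ``single out the extremal blow-up'': they maximise only the left-hand side of equation~(\ref{eqn:weights}), which counts $k$-cycles of one particular winding pattern (all detours confined to two adjacent parts) and is merely a lower bound on the $C_k$-count of the blow-up. The true objective is the cyclic polynomial $P_{k,\ell}$ discussed at the end of Section~\ref{sec:remarks}, whose maximiser the paper explicitly states is unknown; indeed for $k=\ell+2$ and $k=\ell+4$ the balanced blow-up wins, while for $k\geq \ell+C\ell/\log\ell$ it does not, and where the transition happens (and what the optimal weights are in between) is open. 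So even the target density $g(k,\ell)$ against which you propose to prove an upper bound is not correctly identified. Phase 1 is fine and genuinely routine, but the proposal does not constitute a proof of the conjecture.
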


Looking from the other side, it is an interesting problem to determine which blow-up of $C_{\ell+2}$ contains the most $k$-cycles on $n$ vertices for large enough $n$. This is essentially the problem of finding the weights $w_0,w_1\dots,w_{\ell+1}$ as in the proof of Theorem \ref{thm:k,l}, which maximise some cyclic polynomial $P_{k,\ell}$ of degree $k$.

For $k=\ell+2$, $P_{k,\ell}=w_0w_1\dots w_{\ell+1}$ which is maximised if $w_0=w_1=\dots =w_{\ell+1}$ by AM-GM. For $k=\ell+4$, $P_{k,\ell}=w_0w_1\dots w_{\ell+1}(w_0w_1+w_1w_2+\dots +w_\ell w_{\ell+1}+w_{\ell+1}w_0)$, which is again maximised if all weights are equal, as can be seen by using the method of Lagrange multipliers.

\bibliographystyle{abbrv}
\bibliography{bibliography}
 
\end{document}